\documentclass{amsart}
\usepackage{amssymb,amsmath,amsrefs}
\usepackage[colorlinks=true]{hyperref}
\hypersetup{urlcolor=blue, citecolor=green}
\usepackage[dvips]{graphicx}
\usepackage{color}
\usepackage{subcaption}
\usepackage{float}
\usepackage{tikz}
\usetikzlibrary{arrows.meta,decorations.pathreplacing,calc}

\definecolor{stab}{RGB}{215,20,42}   % stable arcs (red)  -- A_1', A_2'
\definecolor{unst}{RGB}{28,20,230}   % unstable arcs (blue)-- U_1, U_2, C^u
\definecolor{bad}{RGB}{200,120,0}    % the problematic continuum / point

\graphicspath{{Imagens/}}
\theoremstyle{plain}

\newtheorem*{que}{Question}
\newtheorem{mainthm}{Theorem}

\newtheorem*{conj*}{Conjecture}
\newtheorem*{cor*}{Corollary}
\newtheorem{theorem}{Theorem}[section]
\newtheorem{thm}[theorem]{Theorem}

\newtheorem{corollary}[theorem]{Corollary}
\newtheorem{lemma}[theorem]{Lemma}

\newtheorem{claim}[theorem]{Claim}

\theoremstyle{definition}
\newtheorem*{def*}{Definition}
\newtheorem{remark}[theorem]{Remark}

\newtheorem{definition}[theorem]{Definition}

\newcommand{\im}{\operatorname{Im}}
\newcommand{\rea}{\operatorname{Re}}
\newcommand{\C}{\mathbb{C}}
\newcommand{\mc}{\mathcal}

\newcommand{\spine}{\operatorname{Spine}}

\newcommand{\T}{\mathbb{T}}

\renewcommand{\epsilon}{\varepsilon}

\newcommand{\Z}{\mathbb{Z}}
\newcommand{\N}{\mathbb{N}}
\newcommand{\R}{\mathbb{R}}
\newcommand{\eps}{\varepsilon}

\newcommand{\dist}{\operatorname{\textit{d}}}

\newcommand{\interior}{\operatorname{int}}

\newcommand{\sing}{\operatorname{Sing}}

\newcommand{\diam}{\operatorname{diam}}

\title[Continuum-wise hyperbolicity and pseudo-Anosov dynamics]{Continuum-wise hyperbolicity is exactly the pseudo-Anosov dynamics with spine singularities}

\author[R. Arruda, B. Carvalho, P. Oprocha,\\
and A. Sarmiento]{Rodrigo Arruda, Bernardo Carvalho, Piotr Oprocha,\\
and Alberto Sarmiento}

\thanks{2020 \emph{Mathematics Subject Classification}: Primary 37B45; Secondary 37D10.}
\keywords{cw-hyperbolicity, classification, pseudo-Anosov.}

\begin{document}

\begin{abstract}
We establish a complete structural classification for continuum-wise hyperbolic surface homeomorphisms. Specifically, we prove that a surface homeomorphism is cw$_F$-hyperbolic if, and only if, it is a pseudo-Anosov homeomorphism  whose singularities consist exclusively of spines (1-prongs). Furthermore, we classify these systems up to topological conjugacy, showing that every such homeomorphism is conjugate to either an Anosov automorphism on the torus $\mathbb{T}^2$ or to its standard hyperelliptic quotient on the sphere $\mathbb{S}^2$. As a rigid consequence of this classification, we show that such dynamics are strictly obstructed on surfaces of genus greater than one, the Klein bottle, and the projective plane.
\end{abstract}

\maketitle

\section{Introduction}

The interplay between low-dimensional topology and dynamical systems has long been anchored by the Nielsen-Thurston classification of surface homeomorphisms \cite{BC, FLPKM, T}. At the heart of this theory lie the pseudo-Anosov maps, which serve as the standard geometric models for chaotic surface dynamics, constructed famously via Dehn twists \cite{P} or measured singular foliations \cite{FLP, HeHi}.
Topologically, a pseudo-Anosov homeomorphism is defined by a pair of invariant, transverse, measured singular foliations that are uniformly expanded and contracted \cite{FLP}. The geometric rigidity of these maps is deeply tied to their singularity structure, which naturally corresponds to the zeros and poles of quadratic differentials in Teichmüller theory, the study of translation surfaces \cite{BL,La, MasurSmillie1993} and the geometry of hyperbolic 3-manifolds \cite{FLM}.

To maintain a consistent local product structure and ensure the uniqueness of the pseudo-Anosov representative within an isotopy class, the classical definition strictly mandates that all singularities must have three or more separatrices (prongs) \cite{FLM, FLPKM}. This topological restriction is beautifully mirrored in pure topological dynamics. In a foundational result, Hiraide \cite{Hi2} and Lewowicz \cite{L} independently proved that a surface homeomorphism is pseudo-Anosov if, and only if, it is expansive. This equivalence establishes a profound bridge between local geometry and global topology: expansiveness not only enforces the standard pseudo-Anosov foliation structure, but it also imposes strict topological obstructions. Specifically, it ensures that surfaces such as the two-dimensional sphere $\mathbb{S}^2$, the Klein bottle, and the projective plane cannot admit expansive homeomorphisms \cite{Hi2, L}.

While the dynamical systems community has actively explored generalized pseudo-Anosov maps -- 
such as through pruning theory and generalized pseudo-Anosov maps \cite{CH, CH2}, limits of pseudo-Anosov sequences \cite{BCGH}, and measurable pseudo-Anosov dynamics \cite{BCH} -- the strict topological obstructions imposed by expansiveness have remained a hard boundary.
Singularities with a single separatrix (spines, or 1-prongs), which correspond geometrically to simple poles of quadratic differentials \cite{MasurSmillie1993}, inherently break local expansiveness. Consequently, they have largely been treated as boundary anomalies rather than as objects with their own rigid dynamical classification.

%------------------------------

Despite being natural geometric objects, spines introduce severe dynamical pathologies under the lens of classical expansiveness. In an arbitrarily small neighborhood of a 1-prong, the local stable and unstable continua must physically ``turn around'' the singularity, forcing them to intersect at least twice. This local failure of unique transversality instantly destroys expansiveness, placing any pseudo-Anosov map with spines strictly outside the Hiraide-Lewowicz classification.

The geometric construction of such systems -- often via the hyperelliptic quotient of a torus -- is a classic tool in geometry, famously utilized by Thurston \cite{T} and rooted in the earlier work of Latt\`es (see survey \cite{Milnor06} by Milnor). In the context of topological dynamics, P. Walters \cite{W} highlighted this  specific construction to provide an explicit counterexample showing that
a topological factor of an expansive homeomorphism need not inherit expansiveness. This phenomenon underscores the subtle interplay between global expansiveness and the local geometry of singular prongs, where the classical product structure collapses.

In subsequent years, systems exhibiting this behavior were explored from distinct viewpoints and  several generalized frameworks were deployed, including non-uniform hyperbolicity and entropy expansiveness \cite{PPV, PV}, as well as continuum-wise expansiveness and surface dendritations \cite{AAV, Artanom, ArD}. These investigations often treated maps with spines as boundary anomalies, 
reflecting the broader challenges in ergodic theory when one moves  beyond
the classical hyperbolic framework, a program famously initiated by Shub and Smale \cite{Mane1987, SS}. 

While these frameworks successfully illuminated various ergodic and local topological properties of maps with spines, a global topological characterization -- analogous to the Hiraide-Lewowicz theorem -- remained an open problem. This leads to a natural question: 

\begin{que}
If classical expansiveness is too restrictive to accommodate 1-prongs, what is the precise dynamical property that characterizes pseudo-Anosov homeomorphisms whose singularities are spines?
\end{que}

In this article, we establish that the precise dynamical framework governing these systems is continuum-wise ($cw_F$) hyperbolicity, a concept recently introduced to extend topological hyperbolicity beyond the expansive regime \cite{ACS, ACCV3}.
Building upon Kato's notion of continuum-wise expansiveness \cite{K1, K2} -- which permits non-trivial dynamical balls, thereby accommodating the ``turning'' of separatrices -- and incorporating a local product structure for these continua, $cw_F$-hyperbolicity captures the exact local geometry of 1-prongs without collapsing the global dynamics. Crucially, just as classical expansiveness acts as a rigid topological filter that mandates singularities of three or more prongs, we prove that $cw_F$-hyperbolicity acts as its exact complement: it rigorously isolates the sub-class of pseudo-Anosov maps where higher-degree singularities are entirely forbidden. Our first main result establishes this equivalence.

\begin{mainthm}\label{Bcw}
A surface homeomorphism $f\colon S\to S$ is cw$_F$-hyperbolic if, and only if, $f$ is a pseudo-Anosov homeomorphism whose singularities  consist exclusively of spines. In particular, cw$_F$-hyperbolic surface homeomorphisms have a finite number of spines and are cw$_2$-hyperbolic.
\end{mainthm}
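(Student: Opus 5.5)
The plan is to prove the two implications separately, relying throughout on the local stable and unstable continua furnished by cw$_F$-hyperbolicity. Recall the definitions. A map $f$ is cw-expansive with constant $c$ if every non-degenerate continuum $C$ has an iterate with $\diam f^n(C)>c$. It is cw$_F$-hyperbolic if, in addition, local stable and unstable continua $C^s_\eps(x)$, $C^u_\eps(y)$ with $d(x,y)<\delta$ always meet, and meet in a finite set. It is cw$_N$-hyperbolic if that intersection has at most $N$ points.

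\textbf{The easy direction.} Let $f$ be pseudo-Anosov with only $1$-prong singularities. Outside a small neighbourhood of the finitely many spines, the measured foliations give a genuine local product structure. Local stable and unstable leaves then meet in exactly one point, and a continuum whose diameter stays below $c$ under all iterates lies in a single stable leaf and in a single unstable leaf, so it is trivial.

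Near a spine $p$, use the standard local model: the branched double cover $z\mapsto z^2$ of a linear hyperbolic saddle, which is exactly Walters' hyperelliptic picture. In this model a local stable arc and a local unstable arc through nearby points, each turning around $p$, meet in at most two points. Non-degenerate continua are still expanded, because a continuum with all iterates of small diameter would lift to a small continuum with the same property for the linear saddle. This yields cw-expansiveness together with the cw$_2$ product structure, so $f$ is cw$_2$-hyperbolic and hence cw$_F$. This also accounts for the ``in particular'' clause, once we show that the converse produces only spines.

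\textbf{The hard direction.} Assume $f$ is cw$_F$-hyperbolic.

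1. Show that each $C^s_\eps(x)$ and $C^u_\eps(x)$ is locally an arc, and that they partition a neighbourhood into a singular product structure. This is done by adapting Hiraide's and Lewowicz's argument for expansive surface maps, replacing expansiveness by cw-expansiveness plus the finite-intersection product property. The expected tools are as follows.
   (a) Lewowicz's result that stable sets of cw-expansive surface maps contain no interior, and the local connectedness of the continua.
   (b) The fact that local stable continua cannot separate small disks in a complicated way, since otherwise some iterate of an unstable continuum would be trapped.

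2. From the resulting singular foliations, define transverse measures via the standard Lewowicz construction of a Lyapunov-type metric or the hyperbolic metric argument. This yields a pseudo-Anosov structure whose singularities are $k$-prongs for some $k\ge1$; $k=2$ corresponds to regular points.

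3. Exclude $k\ge 3$. Near a $k$-prong with $k\ge3$ one can choose points $x,y$ arbitrarily close across a prong sector whose local stable and unstable continua do not meet within $\eps$. Alternatively, the $\delta$-closeness condition fails uniformly: the product structure would require intersection, but sectors separated by another prong block it. This contradicts cw$_F$-hyperbolicity.

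4. Finiteness of the number of spines follows from the Euler–Poincaré formula and compactness.

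\textbf{Main obstacle.} I expect step 1 to be the hardest: obtaining the arc structure and the transverse singular foliation without expansiveness. Continua in dynamical balls may be non-trivial precisely at spines, so Hiraide's arguments, which use uniqueness of intersection points, break down. My first approach would be to show that the set of points where $C^s_\eps(x)\cap C^u_\eps(x)\neq\{x\}$ is finite and $f$-invariant. Removing it gives an expansive-like open surface, where the Hiraide–Lewowicz local analysis applies verbatim. Then I would analyse each exceptional point separately, showing that it is an isolated spine via its local double-intersection picture.
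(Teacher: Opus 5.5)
Your converse direction follows the paper's in outline. You do still need something like the paper's metric $d_\rho$ to turn growth of transverse measure into growth of diameter, and to get the cw-local-product-structure in the original metric. Your Step~3 is a valid alternative, and it can be shortcut: local stable/unstable continua are arcs, so $p(x)\le 2$.

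The genuine gap is in the hard direction, exactly where you flag it, and the remedy you propose fails. Step~4 obtains finiteness of spines only \emph{after} Step~1 has produced singular product charts at every point. But once such charts exist, singular points are automatically isolated. So the entire difficulty is building a chart at a regular point $x$ that is a limit of spines $x_i\to x$, where uniqueness of stable/unstable intersections fails in every neighbourhood of $x$.

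Your plan to show that $E_\eps=\{x : C^s_\eps(x)\cap C^u_\eps(x)\neq\{x\}\}$ is finite cannot work. In Walters' example on $\mathbb{S}^2=\mathbb{T}^2/\pm$, write a point near a spine as $\sigma(b+ia)$ in eigen-coordinates, with the stable direction real. For $|a|,|b|$ small compared to $\eps$, its $\eps$-stable and $\eps$-unstable continua contain the images of segments of the lines $\{\operatorname{Im} z=a\}$ and $\{\operatorname{Re} z=b\}$. These images meet again at $\sigma(-b+ia)$, which differs from $\sigma(b+ia)$ whenever $ab\neq 0$. So $E_\eps$ contains an open dense subset of a neighbourhood of every spine. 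For the same reason $f$ is not expansive on $S\setminus\spine(f)$, so nothing of Hiraide--Lewowicz applies ``verbatim'' there.

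What is missing is an argument that spines cannot accumulate, placed \emph{before} the chart construction. The paper gets it in two steps. First, each spine lies in a regular bi-asymptotic sector containing no other spine, so spines do not accumulate on spines. Second, a regular point $x$ cannot be a limit of spines, by a connectedness argument:
\begin{enumerate}
\item Stable/unstable arcs of $x$ and of a nearby spine bound a small rectangle $R$ with infinitely many spines inside.
\item The set of points of the stable side $A_1$ whose unstable plaque in $R$ reaches the opposite side $A_2$ is closed.
\item The unstable continuum of a spine in $R$ runs to $A_1$, so the corresponding plaque ends at that spine without reaching $A_2$.
\item This produces a subrectangle $R'$ whose interior splits into two disjoint nonempty relatively closed sets: points whose plaques reach $A_1'$ and points whose plaques reach $A_2'$. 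This contradicts connectedness.
\end{enumerate}
Only then does every regular point have a spine-free neighbourhood. There, a stable and an unstable arc meeting twice would bound a bi-asymptotic sector and hence enclose a spine. That uniqueness is what makes $h(y,z)=L^u_y\cap L^s_z$ a $\mathcal{D}_2$-chart.

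Step~2 has a related hole. Lewowicz's Lyapunov-function construction uses expansiveness, which fails near spines. Producing transverse measures also needs minimality of the foliations, which you never establish. The paper obtains minimality by passing to the transverse-orientation double cover branched over the (now finitely many) spines. There the lift has no $1$-prongs and is cw$_1$-hyperbolic, hence topologically hyperbolic and conjugate to a toral Anosov automorphism. The paper then appeals to Hiraide's Proposition~B for the measured foliations.
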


This strict exclusion of higher-degree singularities immediately unlocks a powerful topological rigidity. By the Poincaré-Hopf index theorem for measured foliations \cite{FLM, FLPKM}, the existence of pseudo-Anosov maps on surfaces of higher genus relies inherently on singularities with three or more prongs to balance the negative Euler characteristic. Because $cw_F$-hyperbolicity forces all singularities to be spines (and bounds local intersections to at most two), the allowable global topology collapses entirely to surfaces with non-negative Euler characteristics.

Our second main result provides the complete topological and geometric classification of these systems, proving that they are fundamentally tied to linear models on the sphere and the torus:

\begin{mainthm}\label{A}
cw$_F$-hyperbolic surface homeomorphisms are topologically conjugate to either an Anosov automorphism of $\mathbb{T}^2$ or to  its standard hyperelliptic quotient on the sphere $\mathbb{S}^2$.
In particular, there are no cw$_F$-hyperbolic homeomorphisms on surfaces of genus greater than one, on the Klein bottle, or the projective plane.
\end{mainthm}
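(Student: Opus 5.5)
We deduce Theorem \ref{A} from Theorem \ref{Bcw}. Let $f\colon S\to S$ be cw$_F$-hyperbolic. By Theorem \ref{Bcw}, $f$ is pseudo-Anosov with transverse invariant measured singular foliations $\mathcal F^s,\mathcal F^u$, and all singularities are finitely many spines $p_1,\dots,p_k$. We first pin down the topology. The Euler--Poincaré formula for a singular foliation on a closed surface gives
\[
2\chi(S)=\sum_{p}(2-n_p),
\]
where $n_p$ is the number of prongs. Every singularity here has $n_p=1$, so $2\chi(S)=k\ge 0$. Thus $\chi(S)\in\{0,1,2\}$, and $k\in\{0,2,4\}$ respectively. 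This already excludes genus $\ge 2$ (orientable or not).

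We then treat the three cases.

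\emph{Case $k=0$.} The foliations are nonsingular, so $f$ is expansive. By Hiraide--Lewowicz, $S$ is the torus, since the Klein bottle admits no expansive homeomorphism. Classically, $f$ is then conjugate to an Anosov automorphism: a pseudo-Anosov map on $\mathbb T^2$ without singularities is isotopic to a hyperbolic linear map, and Franks' theorem or the uniqueness of the pA representative gives the conjugacy.

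\emph{Case $k=2$, $\chi(S)=1$, $S=\mathbb{RP}^2$.} Here I would pass to the orientation double cover $\tilde S=\mathbb S^2$. Each spine lifts to two spines, so the lifted pA map on $\mathbb S^2$ has $4$ spines. This is consistent, so a different obstruction is needed. Use instead the Lefschetz/Nielsen count: $\mathbb{RP}^2$ has trivial mapping class group up to isotopy, so $f$ is isotopic to the identity, and hence is homotopic to a map of entropy zero. A pseudo-Anosov map has positive entropy and is entropy-minimal in its isotopy class, which is a contradiction.

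\emph{Case $k=4$, $\chi(S)=2$, $S=\mathbb S^2$.} This is the main case. Let $\pi\colon \mathbb T^2\to\mathbb S^2$ be the double cover branched exactly at the four spines. Around a 1-prong the two-fold branched cover produces a regular (2-prong) point, so the pulled-back foliations on $\mathbb T^2$ are nonsingular. Some iterate $f^2$, or $f$ itself, lifts to a homeomorphism $F$ of $\mathbb T^2$; choose the lift that preserves the lifted foliations. Then $F$ is a nonsingular pA, hence by the previous case conjugate to a hyperbolic $A\in GL_2(\mathbb Z)$ via some $h$. The deck involution $\iota$ commutes with $F$ and preserves the foliations. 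Under $h$ it becomes an involution commuting with $A$ with four fixed points, and rigidity (uniqueness of the linear model in the isotopy class, with $\iota$ isotopic to $-\mathrm{Id}$) shows it is conjugate to $-\mathrm{Id}$ through a conjugacy commuting with $A$. Descending gives the conjugacy of $f$ with the standard quotient of $A$ or $-A$.

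\emph{Klein bottle.} We still need $\chi=0$ non-orientable with $k=0$, which was handled above.

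The main obstacle is the lifting step in the case $k=4$. One must check that $f$ permutes the spines so that the branched cover is $f$-equivariant; this holds because the cover is canonical, determined by $H_1$ of the complement mod 2 with all loops around punctures odd. One must also arrange the conjugacy to be $\iota$-equivariant. I would first try making $h$ equivariant by averaging with Franks' semiconjugacy, which is unique among maps homotopic to the identity. Since $h\iota h^{-1}$ and $-\mathrm{Id}$ both conjugate $A$-compatibly, uniqueness forces $h\iota=-h$, up to a translation by a 2-torsion fixed point.
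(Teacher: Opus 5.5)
The reduction to $\chi(S)=k/2\in\{0,1,2\}$ via the Euler--Poincar\'e formula is fine. It is the paper's Riemann--Hurwitz identity $\chi(S)=\tfrac12\#\spine(f)$, obtained without passing through the cover. The case $k=0$ is also fine. The genuine gap is the case $S=\mathbb{RP}^2$. Entropy minimality of a pseudo-Anosov map holds in its isotopy class \emph{relative to the $1$-prong points}, i.e.\ on the surface punctured at the spines. It does not hold in the isotopy class on the closed surface. The hyperelliptic models show this: the quotient $g$ of $A\in SL(2,\mathbb{Z})$ on $\mathbb{S}^2$ preserves orientation, so it is isotopic to the identity, yet $h_{\mathrm{top}}(g)=\log\lambda>0$. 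Your argument would therefore exclude $\mathbb{S}^2$ as well.

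The paper handles $\mathbb{RP}^2$ differently. It lifts $f$ to $\mathbb{S}^2$ through $x\sim -x$ (Theorem~\ref{fincovcw}) and conjugates the lift to the hyperelliptic model. The antipodal map then becomes a fixed-point-free involution commuting with that model. Its lift to $\mathbb{T}^2$ is affine, $x\mapsto Lx+p$, with $\det(L-I)=\det(L+I)=0$, which is incompatible with $AL=\pm LA$ for hyperbolic $A$.

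A repair within your own tools is also available. A double cover of a non-orientable surface branched over a nonempty set has non-orientable total space: the orientation character is trivial on small loops around branch points, while the character defining the cover is not. Take the double cover branched exactly at the two spines, e.g.\ the transverse-orientation cover from the proof of Theorem~\ref{foliation}, to which $f$ lifts. It has $\chi=2\cdot 1-2=0$ and is non-orientable, so it is a Klein bottle. The lift of $f$ is a nonsingular pseudo-Anosov, hence expansive, homeomorphism of it, contradicting Hiraide--Lewowicz.

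In the sphere case your outline matches the paper's. You take the branched double cover at the four spines, conjugate the lift to a hyperbolic $A$, and show the deck involution is $x\mapsto -x+q$ by rigidity of maps commuting with $A$. The last step, however, is not a formality. Conjugating $x\mapsto -x+q$ to $-\mathrm{Id}$ by a translation commuting with $A$ requires some $c$ with $2c\equiv q$ and $(A-I)c\in\mathbb{Z}^2$. Commutation only gives $(A-I)q\in\mathbb{Z}^2$. Correcting $q/2$ by a $2$-torsion point works for every such $q$ only if $A-I$ is bijective on $\tfrac12\mathbb{Z}^2/\mathbb{Z}^2$, i.e.\ only if $\det(A-I)$ is odd.

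The paper's proof has the same lacuna: it asserts that $A-I$ permutes the $2$-torsion points. The gap cannot be closed as stated. Take $A=\begin{pmatrix}3&1\\2&1\end{pmatrix}$ and $w=(0,\tfrac12)$. The map $x\mapsto Ax+w$ commutes with $-\mathrm{Id}$ on $\mathbb{T}^2$. Its quotient on $\mathbb{S}^2$ is pseudo-Anosov with four spines, which it permutes cyclically. The quotient of any linear automorphism, by contrast, fixes the spine $\sigma(0)$, so the two cannot be conjugate. What your argument, and the paper's, actually yields on $\mathbb{S}^2$ is conjugacy to the quotient of an affine map $x\mapsto Ax+w$ with $w\in\tfrac12\mathbb{Z}^2$.
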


Together, these results bridge the gap between classical expansiveness and generalized pseudo-Anosov maps. They confirm that systems whose only singularities are spines are not mere boundary anomalies, but belong to a rigid, classifiable category of their own, governed by the exact same algebraic models that dominate the classical hyperbolic theory.

\vspace{0.3cm}
\noindent \textbf{Organization of the paper.}
In Section 2, we gather the necessary preliminary concepts, including the definitions of continuum-wise expansiveness and local product structures, and  perform a local dynamical analysis of spines, establishing the foundational geometric bounds required for our main arguments. Section 3 is devoted to the formal geometric properties of pseudo-Anosov homeomorphisms and singular foliations. In this section we prove Theorem \ref{Bcw}, demonstrating the exact equivalence between $cw_F$-hyperbolicity and pseudo-Anosov homeomorphisms with spines. Finally, in Section 4, we utilize this structural equivalence to deduce the global topological obstructions and prove the conjugacy classification stated in Theorem \ref{A}.  The paper concludes with an appendix, where we provide a proof that the lifts of a cw$_F$-hyperbolic homeomorphism by a finite covering map are cw$_F$-hyperbolic.

\section{Finiteness of Spines}

In this section, we prove that cw$_F$-hyperbolic homeomorphisms have at most a finite number of spines. First, we state necessary definitions to describe precisely cw$_F$-hyperbolicity. Its definition is the join of several definitions presented in distinct previous articles, which are stated and discussed below.

\begin{definition}[Local stable/unstable sets]
Let $(X,d)$ be a compact metric space and $f\colon X\to X$ be a homeomorphism.
We consider the \emph{c-stable set} of $x\in X$ as the set 
$$W^s_{c}(x):=\{y\in X; \,\, d(f^k(y),f^k(x))\leq c \,\,\,\, \textrm{for every} \,\,\,\, k\geq 0\}$$
and the \emph{c-unstable set} of $x$ as the set 
$$W^u_{c}(x):=\{y\in X; \,\, d(f^k(y),f^k(x))\leq c \,\,\,\, \textrm{for every} \,\,\,\, k\leq 0\}.$$
\end{definition}

Using local stable/unstable sets we can define the dynamical ball of points in the space and expansiveness, as defined in \cite{Utz}.

\begin{definition}[Expansiveness]
For each $x\in X$ and $c>0$, let $$\Gamma_{c}(x)=W^u_{c}(x)\cap W^s_{c}(x)$$ be the \emph{dynamical ball} of $x$ with radius $c$. We say that $f$ is \emph{expansive} if there exists $c>0$ such that $$\Gamma_c(x)=\{x\} \,\,\,\,\,\, \text{for every} \,\,\,\,\,\, x\in X.$$ A constant $c$ satisfying the above definition is called an expansivity constant of $f$.
\end{definition}

The cw-expansive homeomorphisms can have non-trivial dynamical balls, they can contain even Cantor sets inside arbitrarily small dynamical balls, but still increase non-trivial continua of the space as in the following definition.

\begin{definition}[cw-expansiveness]\label{cwexp}
We say that $f$ is continuum-wise expansive (or simply $cw$-expansive) if there exists $c>0$ such that $$\Gamma_c(x) \quad \text{ is totally disconnected }$$ for every $x\in X$.
\end{definition}

This was introduced by H. Kato in \cites{K1,K2} and, informally speaking, means that the diameter of iterates of non-trivial continua must increase an uniform amount over time. The class of cw-expansive homeomorphisms is much wider than the class of expansive homeomorphisms. In fact, many systems which appear in chaotic topological dynamics and continuum theory are cw-expansive but are not expansive. Kato discussed some of these examples in \cites{K1,K2} and also explored similarities and differences between expansive and cw-expansive systems. A classification of cw-expansive homeomorphisms is far from being obtained and new examples of such systems appeared in the last few years \cite{AAV,Artanom,ArD,Artrobn,APV,ACC,ACCV2,CC}. To define cw$_F$-expansiveness we need to consider the local stable/unstable continua as follows.

\begin{definition}[Local stable/unstable continua]
We denote by $C^s_c(x)$ the $c$-stable continuum of $x$, that is the connected component of $x$ on $W^s_{c}(x)$, and denote by $C^u_c(x)$ the $c$-unstable continuum of $x$, that is the connected component of $x$ on $W^u_{c}(x)$.
\end{definition}

There is an important difference between the local stable/unstable continua $C^s_{\eps}(x)/ C^u_{\eps}(x)$, which in many examples of surface homeomorphisms are arcs, and the local stable/unstable sets $W^s_{\eps}(x)/ W^u_{\eps}(x)$, which can be even the union of a Cantor set of disjoint arcs \cite{ArD}. This difference translates to differences between cw-expansiveness and cw$_F$-expansiveness, defined as follows.

\begin{definition}[cw$_F$-expansiveness]\label{cwf}
A $cw$-expansive homeomorphism is said to be cw$_F$-expansive if there exists $c>0$ such that $$\#(C^s_c(x)\cap C^u_c(x))<\infty \quad \text{ for every } \quad x\in X.$$ Analogously, $f$ is said to be $cw_N$-expansive if there is $c>0$ such that $$\#(C^s_c(x)\cap C^u_c(x))\leq N \quad \text{ for every } \quad x\in X.$$    
\end{definition}

Cw$_F$-expansiveness was defined in \cite{ArD} to describe dynamics around a spine singularity in terms of expansiveness properties. At a first glance, the structure of two arbitrarily close intersections between stable/unstable leaves close to a spine, and single intersections around regular points, suggest that the dynamics is 2-expansive; recall that $N$-expansiveness was defined in \cite{Mor} (and further explored in \cites{Artrobn,APV,CC,CC2,LZ}) as the existence of a bound N for the number of points allowed in all dynamical balls of the system. But the difference between the local stable/unstable sets and local stable/unstable continua play an important role, while the existence of the Cantor sets explained above contradict N-expansiveness (see Section 2.2.1 in \cite{ArD}). Now we are able to define cw$_F$-hyperbolicity.

\begin{definition}[cw-hyperbolicity]\label{cwfh}
We say that $f$ satisfies the $cw$-local-product-structure if for each $\eps>0$ there exists $\delta>0$ such that $$C^s_\eps(x)\cap C^u_\eps(y)\neq \emptyset \quad \text{ whenever } \quad \dist(x,y)< \delta.$$ The cw-expansive homeomorphisms (resp.\ cw$_F$, cw$_N$) satisfying the cw-local-product-structure are called cw-hyperbolic (resp.\ cw$_F$, cw$_N$).
\end{definition}

Cw-hyperbolic systems share several important properties with the topologically hyperbolic ones, such as the L-shadowing property \cite{ACCV} and a spectral decomposition theorem \cite{A,ACCV3,S}, but a few important differences exist, %are noted on pseudo-Anosov diffeomorphisms such as on 
as revealed by the previously mentioned
Walter's example on $\mathbb{S}^2$ (see further results in \cite{CR,COR}). In fact, the examples induced on the sphere by a hyperbolic matrix in $SL(2,\Z)$ through the quotient of identifying antipodal points (which is a hyperelliptic quotient) are examples of cw$_2$-hyperbolic homeomorphisms.

Some results in the direction of a classification of cw$_F$-hyperbolic surface homeomorphisms were obtained in \cite{ACS}. In what follows, we describe these results and use them to prove that cw$_F$-hyperbolic surface homeomorphisms $f\colon S\to S$ have a finite number of spines. The first fact is that local stable/unstable continua are locally connected \cite{ACS}*{Lemma 2.6}, that is, there exists $\eps>0$ such that $C^{\sigma}_{\eps}(x)$ is locally connected for every $x\in X$ and $\sigma\in\{s,u\}$. Thus, for each $x\in X$ and $y\in C^{\sigma}_{\eps}(x)$ we can consider the unique arc $\sigma(x,y;x)$ in $C^\sigma_\varepsilon(x)$ connecting $x$ to $y$.

\begin{definition}[Number of separatrices]
For $y,z\in C^\sigma_\varepsilon(x)$ we define
the relation
\[
y\sim z
\quad\text{if}\quad
\sigma(x,y;x)\cap\sigma(x,z;x)\supsetneq\{x\}.
\]
The number of $\sigma$-separatrices at $x$ is defined as
$$P^\sigma(x)=\#\big(C^\sigma_\varepsilon(x)/\sim\big).$$
\end{definition}
For cw$_F$-hyperbolic surface homeomorphisms the numbers of stable and unstable separatrices coincide, that is,
$$P^s(x)=P^u(x) \,\,\,\,\,\, \text{for every} \,\,\,\,\,\, x\in S$$
and we denote their common value by $p(x)$ (see \cite{ACS}*{Corollary 2.11}). The number of separatrices $p(x)$ is at most $2$ (see \cite{ACS}*{Lemma 2.12}) which is a consequence of the fact that local stable/unstable continua are necessarily arcs. A point $x\in S$ such that $p(x)=1$ is called a \emph{spine}, and the set $\spine(f)$ is defined as the set of spines of $f$. Points $x\in S$ that are not spines are called \emph{regular} and satisfy $p(x)=2$. 

The cw-local-product-structure ensures that intersections between stable and unstable continua are topologically transversal when the point of intersection is regular (see Lemma 2.16 of \cite{ACS}). 
%\textcolor{red}{Rodrigo, incluir a figura da transversalidade aqui.} 
The following is the main result of this section.

\begin{thm}\label{finitespines}
    If $f$ is cw$_F$-hyperbolic surface homeomorphism, then $\spine(f)$ is a finite set.
\end{thm}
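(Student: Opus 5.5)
The plan is to argue by contradiction: assume $\spine(f)$ is infinite, pick an accumulation point, and show that near it the cw$_F$ property fails. By compactness of $S$ there is a sequence of distinct spines $x_n\to x$. Fix $\eps>0$ small enough that $C^\sigma_\eps(y)$ is a locally connected arc for every $y$ (\cite{ACS}*{Lemma 2.6, Lemma 2.12}) and that $\eps$ is a cw$_F$-expansivity constant. Let $\delta>0$ be given by the cw-local-product-structure for $\eps/2$.

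The key step is a local description of a spine $z$. Since $p(z)=1$, both $C^s_\eps(z)$ and $C^u_\eps(z)$ are arcs with $z$ as an endpoint. For points $w$ close to $z$, the local product structure forces $C^s_\eps(w)$ to meet $C^u_\eps(z)$ and $C^u_\eps(w)$ to meet $C^s_\eps(z)$. Since the separatrix ``turns around'' $z$, $C^u_\eps(w)$ meets $C^s_\eps(z)$ in two points. I would try to show that there is a neighbourhood $U_z$ (a disc bounded by pieces of stable and unstable arcs through nearby points) in which every point $w\neq z$ has $C^s_\eps(w)\cap C^u_\eps(w)$ consisting of $w$ and at most one other point, and in which no other spine exists. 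Showing this uniformly is the issue.

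I would therefore make the argument quantitative using the accumulation point $x$. For $n,m$ large, $d(x_n,x_m)<\delta$, so $C^s_{\eps/2}(x_n)\cap C^u_{\eps/2}(x_m)\neq\emptyset$, and symmetrically. The stable arc from $x_n$ and the unstable arc from $x_m$ both emanate from an endpoint. Using regular points between them, where intersections are topologically transversal (\cite{ACS}*{Lemma 2.16}), I would build a small closed curve formed by a stable arc and an unstable arc through a nearby regular point $w$ that encloses both $x_n$ and $x_m$. An unstable arc near two spines must wrap around each of them. Its intersections with a nearby stable arc then number at least three, and by taking $k$ spines accumulating at $x$ one can get at least $k+1$ intersections. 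The argument then has two branches. If the number of spines is unbounded, this contradicts finiteness of $C^s_\eps(w)\cap C^u_\eps(w)$. It also contradicts the bound from \cite{ACS} that intersections come from at most two separatrices, because stable and unstable arcs cannot coincide on a subarc without violating cw-expansiveness.

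I expect the main obstacle to be the planar-topology step: making rigorous that a stable arc near two or more spines must cross a given unstable arc many times. The plan is to use the Jordan curve theorem on the disc bounded by $C^s_\eps(x_n)$, $C^u_\eps(x_n)$ and the short arcs given by local product structure. A regular point $w$ inside this disc has $C^u_\eps(w)$ transverse to the stable boundary by Lemma 2.16. It therefore cannot end inside the disc, because endpoints of local arcs lie at $\eps$-distance or are spines. Counting boundary crossings around each enclosed spine then gives the needed lower bound.

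One alternative fallback should also be noted: iterating $f^{\pm N}$ sends spines to spines, since $p$ is invariant, and expands the distance between $x_n$ and $x_m$ along unstable arcs. This would produce a contradiction with cw-expansiveness applied to the continuum $C^u(x_n)\cup C^s$-path joining them if that continuum stays in a dynamical ball.
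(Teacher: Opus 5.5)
\textbf{There is a genuine gap.} The step that is supposed to produce the contradiction is not proved, and even if proved it would not give one.

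\emph{The intersection count is unjustified.} A closed curve made of a stable arc and an unstable arc through a regular point $w$, enclosing $x_n$ and $x_m$, is a bi-asymptotic sector. By \cite{ACS}*{Proposition 3.6} a regular one contains exactly one spine. Nothing in your argument produces a (non-regular) sector around two spines. Nor does it produce the ``wrapping'' that is supposed to give $k+1$ intersection points for $k$ enclosed spines. You flag this yourself as the main obstacle, and the Jordan-curve sketch never says what is being counted.

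\emph{The count would not contradict anything.} cw$_F$-expansiveness only gives $\#(C^s_\eps(w)\cap C^u_\eps(w))<\infty$ for each $w$. Points $w_k$ with at least $k+1$ intersections for every $k$ are perfectly compatible with this; you would need a single point with infinitely many.

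\emph{The appeal to ``at most two separatrices'' conflates two quantities.} $p(w)\le 2$ (\cite{ACS}*{Lemma 2.12}) counts the branches of $C^\sigma_\eps(w)$ at $w$, not the points of $C^s_\eps(w)\cap C^u_\eps(w)$. A uniform bound $2$ on the latter is cw$_2$-expansiveness, i.e.\ \cite{ACS}*{Theorem 1.1}. That result assumes finitely many spines, so invoking it is circular; in the paper it is Corollary~\ref{cor:cw2}, deduced from this very theorem.

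\emph{The fallback does not work.} A continuum containing a nontrivial unstable piece is expanded by forward iteration, so it never stays in a dynamical ball. Cw-expansiveness therefore yields nothing here. A smaller error: in the $\mathcal{D}_1$ model a regular unstable leaf meets the stable separatrix of the spine once. It is a regular stable leaf and a regular unstable leaf that meet twice.

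\textbf{What is missing} is a way to use the product structure at the accumulation point, rather than counting intersections. Your remark that unstable arcs cannot end inside the disc except at spines is the right seed. The paper's argument runs as follows.
\begin{enumerate}
\item It uses \cite{ACS}*{Lemma 3.7} (each spine lies inside a regular bi-asymptotic sector containing no other spine) to see that the limit $x$ is regular. You never address this.
\item It fixes $\eps'$ with $C^s_{\eps'}(x)\cap C^u_{\eps'}(x)=\{x\}$. The cw-local-product-structure then gives a small rectangle $R$ with a corner at $x$, bounded by stable arcs $A_1,A_2$ and two unstable arcs, with infinitely many spines in its interior.
\item The set of $y\in A_1$ whose unstable plaque $C^u_R(y)$ reaches $A_2$ is closed, by semicontinuity of local unstable continua. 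It contains the endpoints of $A_1$. It misses the points whose plaque runs into an interior spine, since a spine is an endpoint of its unstable arc.
\item A complementary component $A_1'$ yields a subrectangle $R'$. Its interior is the union of two disjoint nonempty closed sets: the points whose plaques reach $A_1'$, and those whose plaques reach the opposite side. This contradicts connectedness.
\end{enumerate}
No bound on the number of stable/unstable intersection points is needed.
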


\begin{proof}
Suppose by contradiction that the set of spines is infinite. Then we have a sequence of spines $(x_i)_{i\in \N}$, and this sequence accumulates in some $x\in S$. Up to passing to a subsequence, we may assume that $(x_i)_{i\in \N}$ converges to $x$. It is a consequence of \cite{ACS}*{Lemma 3.7} that $x$ is not a spine because around each spine there is a bi-asymptotic sector ensuring that spines are isolated from other spines. Cw$_F$-expansiveness ensures the existence of $\eps' \in (0,\eps)$ such that $C^s_{\eps'}(x)\cap C^u_{\eps'}(x)=\{x\}$ (see Lemma 2.10 of \cite{ACS}). Consider $\delta'\in(0,\eps')$ and $\gamma\in(0,\eps')$ the constants of $cw$-local-product-structure and uniform diameter of local stable/unstable continua for $\eps'>0$, respectively. The cw-local-product-structure ensures that, for $i$ sufficiently large, we have
$$C^s_{\eps'}(x_i)\cap C^u_{\eps'}(x) \neq \emptyset \,\,\,\,\,\, \text{and} \,\,\,\,\,\, C^u_{\eps'}(x_i)\cap C^s_{\eps'}(x)\neq \emptyset,$$ 
and by our choice of the constants, these intersections ensure that the configuration forms a topological rectangle $R$ with $\diam R<\gamma$ delimited by subarcs 
$$A_1\subset C^s_{\eps'}(x), \,\,l_x^u\subset C^u_{\eps'}(x), \,\, A_2\subset C^s_{\eps'}(x_i), \,\, \text{and} \,\, l_{x_i}^u\subset C^u_{\eps'}(x_i).$$ 
Up to choosing a subsequence, we may assume that $i=1$. (see Figure \ref{fig:toprectangle}).
    
    \begin{figure}[H]
        \centering
        \begin{tikzpicture}[x=0.75pt,y=0.75pt,yscale=-1,xscale=1]
        %uncomment if require: \path (0,300); %set diagram left start at 0, and has height of 300
        
        %Straight Lines [id:da030976747887525025] 
        \draw [color={rgb, 255:red, 30; green, 0; blue, 255 }  ,draw opacity=1 ]   (239,58) -- (238,232) ;
        %Straight Lines [id:da3239738110209489] 
        \draw [color={rgb, 255:red, 255; green, 5; blue, 36 }  ,draw opacity=1 ]   (434,211) -- (218,211) ;
        %Straight Lines [id:da6947891632530371] 
        \draw [color={rgb, 255:red, 255; green, 5; blue, 36 }  ,draw opacity=1 ]   (411,71) -- (216,71) ;
        %Straight Lines [id:da8332788695771971] 
        \draw [color={rgb, 255:red, 30; green, 0; blue, 255 }  ,draw opacity=1 ]   (411,71) -- (411,232) ;
        %Shape: Circle [id:dp8061311924146355] 
        \draw  [fill={rgb, 255:red, 0; green, 0; blue, 0 }  ,fill opacity=1 ] (234.75,211.13) .. controls (234.75,209.4) and (236.15,208) .. (237.88,208) .. controls (239.6,208) and (241,209.4) .. (241,211.13) .. controls (241,212.85) and (239.6,214.25) .. (237.88,214.25) .. controls (236.15,214.25) and (234.75,212.85) .. (234.75,211.13) -- cycle ;
        %Shape: Circle [id:dp8194059042877077] 
        \draw  [fill={rgb, 255:red, 0; green, 0; blue, 0 }  ,fill opacity=1 ] (413.78,72.28) .. controls (413.14,73.89) and (411.32,74.67) .. (409.72,74.03) .. controls (408.11,73.39) and (407.33,71.57) .. (407.97,69.97) .. controls (408.61,68.36) and (410.43,67.58) .. (412.03,68.22) .. controls (413.64,68.86) and (414.42,70.68) .. (413.78,72.28) -- cycle ;
        %Shape: Circle [id:dp6867580057606527] 
        \draw  [fill={rgb, 255:red, 0; green, 0; blue, 0 }  ,fill opacity=1 ] (370.78,103.78) .. controls (370.14,105.39) and (368.32,106.17) .. (366.72,105.53) .. controls (365.11,104.89) and (364.33,103.07) .. (364.97,101.47) .. controls (365.61,99.86) and (367.43,99.08) .. (369.03,99.72) .. controls (370.64,100.36) and (371.42,102.18) .. (370.78,103.78) -- cycle ;
        %Shape: Circle [id:dp953439951210983] 
        \draw  [fill={rgb, 255:red, 0; green, 0; blue, 0 }  ,fill opacity=1 ] (336.28,131.28) .. controls (335.64,132.89) and (333.82,133.67) .. (332.22,133.03) .. controls (330.61,132.39) and (329.83,130.57) .. (330.47,128.97) .. controls (331.11,127.36) and (332.93,126.58) .. (334.53,127.22) .. controls (336.14,127.86) and (336.92,129.68) .. (336.28,131.28) -- cycle ;
        %Shape: Circle [id:dp8435302503472618] 
        \draw  [fill={rgb, 255:red, 0; green, 0; blue, 0 }  ,fill opacity=1 ] (300.28,157.28) .. controls (299.64,158.89) and (297.82,159.67) .. (296.22,159.03) .. controls (294.61,158.39) and (293.83,156.57) .. (294.47,154.97) .. controls (295.11,153.36) and (296.93,152.58) .. (298.53,153.22) .. controls (300.14,153.86) and (300.92,155.68) .. (300.28,157.28) -- cycle ;
        
        % Text Node
        \draw (280,179.41) node [anchor=north west][inner sep=0.75pt]  [rotate=-96.29]  {$\ddots $};
        % Text Node
        \draw (417.5,60) node [anchor=north west][inner sep=0.75pt]    {$x_{1}$};
        % Text Node
        \draw (371.88,106.03) node [anchor=north west][inner sep=0.75pt]    {$x_{2}$};
        % Text Node
        \draw (338,133.9) node [anchor=north west][inner sep=0.75pt]    {$x_{3}$};
        % Text Node
        \draw (220,136.9) node [anchor=north west][inner sep=0.75pt]    {$l_{x}^{u}$};
        % Text Node
        \draw (415,136.9) node [anchor=north west][inner sep=0.75pt]    {$l_{x_{1}}^{u}$};
        % Text Node
        \draw (310.5,214.4) node [anchor=north west][inner sep=0.75pt]    {$A_{1}$};
        % Text Node
        \draw (309,52) node [anchor=north west][inner sep=0.75pt]    {$A_{2}$};

        \draw (225,214) node [anchor=north west][inner sep=0.75pt]  {$x$};
        \end{tikzpicture}
        \caption{Topological rectangle $R$.}
        \label{fig:toprectangle}
    \end{figure}
We can also assume that there is a rectangle as above such that $x_i$ is in the interior of $R$ for all $i\geq2$. Indeed, if $x_i$ is not in the interior of $R$, then the cw-local-product-structure ensures that the stable/unstable continua of both $x_i$ and $x$ form another rectangle with interior disjoint from $R$. Combining the facts that $S$ is a surface, $p(x)=2$, and $p(x_i)=1$ for all $i\in\N$, we can cover a neighborhood of $x$ with at most four of such rectangles with pairwise disjoint interior. Since $(x_i)_{i\in \N}$ converges to $x$, one of these rectangles contains an infinite number of spines in its interior, so we can assume, by choosing a subsequence of $(x_i)_{i\in \N}$, that $x_i$ is in the interior of a rectangle $R$ for $i\geq2$.

For each $y\in R$, let $C^u_R(y)$ denote the connected component of $C^u_{\eps'}(y)\cap R$ that contains $y$. 

%Fix $n\in \N$ and the spine $x_n$. Then $C^u_R(x_n)$ intersects either $A_1$ or $A_2$, but not both. Assume that it intersects $A_1$ at the point $y_n$, the case which it intersects $A_2$ is analogous.

\begin{claim}\label{claim}
        The set of points $y\in A_1$ such that $C^u_R(y)$ intersects $A_2$ is closed in $A_1$.
\end{claim}    

\begin{proof}[Proof of the Claim]
Suppose we have a sequence $(y_n)_{n\in\N}\subset A_1$ converging to $y\in A_1$ such that 
$$C^u_R(y_n)\cap A_2\neq \emptyset \,\,\,\,\,\, \text{for every} \,\,\,\,\,\, n\in\N.$$
For each $n\in\N$, choose $z_n\in C^u_R(y_n)\cap A_2$.
%Since $y\in A_1$, it is clear that $C^u_R(y)\cap A_1\neq \emptyset$.
The semi-continuity of the map $y\mapsto C^u_{R}(y)$ (see page 15 and Theorem 6.7.1 of \cite{ArD}) ensures that any accumulation point $z\in A_2$ of $(z_n)_{n\in\N}$ satisfies $z\in C^u_R(y)$. This proves the claim.
%every neighborhood of $C^u_R(y)$ must intersect $A_2$. Since $C^u_R(y)$ is closed, we have that $C^u_R(y)\cap A_2\neq \emptyset$. 
\end{proof}

The cw-local-product-structure ensures that for sufficiently large $i\in\N$ we have that $C^u_{\eps'}(x_i)$ intersects $A_1$ first than it intersects $A_2$ (in the order given by the parametrization of the arc $C^u_{\eps'}(x_i)$ starting at $x_i$), which in turn ensures that $C^u_{R}(x_i)\cap A_2=\emptyset$, given the transversality between $C^u_{\eps'}(x_i)$ and $A_1$. This proves the existence of points $z\in A_1$ such that $C^u_{R}(z)$ starts at $z$ and ends in a spine in the interior of $R$, which ensures in particular that $C^u_{R}(z)\cap A_2=\emptyset$. By considering the connected component of $z$ in the complement of the set of Claim \ref{claim} we obtain a non-trivial sub-arc $A_1'\subset A_1$ such that $C^u_R(y)\cap A_2\neq\emptyset$ if, and only if, $y$ belongs to the boundary of $A_1'$. In particular, $C^u_R(y)\cap A_2=\emptyset$ for every $y$ in the interior of $A_1'$. 

Let $y_1$ and $y_2$ be the points of the boundary of $A_1'$.
%Using the above claim and the fact that there exists a spine inside $R$, we can find a subarc $A_1'\subset A_1$ such that $C^u_R(y)\cap A_2\neq\emptyset$ if, and only if, $y$ belongs to the boundary of $A_1'$.
%the only points that intersects both $A_1$ and $A_2$ are the boundary of $A_1'$. 
We obtain a new rectangle $R'\subset R$ formed by the arcs $A_1'$, $C^u_{R}(y_1)$, $C^u_{R}(y_2)$, and $A_2'$ (a sub-arc of $A_2$ connecting $C^u_{R}(y_1)\cap A_2$ and $C^u_{R}(y_2)\cap A_2$). Note that $A_2'$ is a non-trivial arc since the points $C^u_{R}(y_1)\cap A_2$ and $C^u_{R}(y_2)\cap A_2$ are distinct (otherwise, transversality would ensure that $p(C^u_{R}(y_1)\cap A_2)=p(C^u_{R}(%y_1
y_2)\cap A_2)=3$, which is a contradiction).
This rectangle satisfies $C^u_{R'}(y)\cap A_2'=\emptyset$ for every $y$ in the interior of $A_1'$.
%if, and only if, $y$ belongs to the arcs $C^u_{R}(y_1)$, $C^u_{R}(y_2)$, and $A_2'$.
%for every point $y\in A_1'$, we have that $C^u_{R'}(y)\cap A_2=\emptyset$, (see Fig...).
For each $i\in\{1,2\}$, let $B_i\subset\interior(R')$ be the set of points $y\in\interior(R')$ such that $C^u_{R'}(y)\cap A_i'\neq\emptyset$. It follows that $B_1\cap B_2=\emptyset$, but this contradicts the connectedness of $\interior(R')$ since $\interior(R')=B_1\cup B_2$, with $B_1$ and $B_2$ being non-empty closed subsets in $\interior(R')$. This contradiction proves that the set of spines is finite. 
\end{proof}

The following result generalizes \cite{ACS}*{Theorem 1.1} removing the hypothesis on the number of spines being finite.

\begin{corollary}\label{cor:cw2}
Any cw$_F$-hyperbolic surface homeomorphism is cw$_2$-hyperbolic.
\end{corollary}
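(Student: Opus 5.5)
The corollary should follow by combining Theorem \ref{finitespines} with \cite{ACS}*{Theorem 1.1}. That earlier result states that a cw$_F$-hyperbolic surface homeomorphism with finitely many spines is cw$_2$-hyperbolic. Theorem \ref{finitespines} shows that the finiteness hypothesis is automatic, so the corollary is immediate once this is invoked.

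If one wants a self-contained argument instead, I would proceed as follows. The cw-local-product-structure is already part of the hypothesis, so the only thing to prove is a uniform bound
\[
\#\bigl(C^s_c(x)\cap C^u_c(x)\bigr)\leq 2
\]
for some $c>0$ and all $x\in S$.

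\textbf{Step 1: the spines.} By Theorem \ref{finitespines}, $\spine(f)=\{s_1,\dots,s_k\}$ is finite. I would choose pairwise disjoint small disks $D_j$ around the $s_j$, each lying inside the bi-asymptotic sector given by \cite{ACS}*{Lemma 3.7}. The local analysis there shows that inside $D_j$ the stable and unstable arcs turn around the spine. Hence, for $c$ small, two such arcs through a point of $D_j$ meet in at most two points.

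\textbf{Step 2: the regular points.} On the compact set $K=S\setminus\bigcup_j \interior(D_j)$, every point is regular. I would use \cite{ACS}*{Lemma 2.10} together with topological transversality (\cite{ACS}*{Lemma 2.16}) to get, for each $x\in K$, a constant $c_x$ with $C^s_{c_x}(x)\cap C^u_{c_x}(x)=\{x\}$. I would then want this to hold uniformly on a neighbourhood of $x$. A compactness argument would then produce a uniform constant on $K$.

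\textbf{Main obstacle.} The hard part is the uniformity in Step 2, since the constant of Lemma 2.10 is a priori pointwise. I would attack it by contradiction. Suppose there are points $x_n\to x$ with $C^s_{1/n}(x_n)\cap C^u_{1/n}(x_n)$ containing a second point. Then these small stable/unstable arcs through $x_n$ bound small bigons. Each bigon must contain a spine, by the same connectedness argument used at the end of the proof of Theorem \ref{finitespines}. This forces spines to accumulate at the regular point $x$, which is impossible because the spines are finite. Near the finitely many spines, the same reasoning limits the number of intersections to two.

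Since this reproduces \cite{ACS}, the proof as written in the paper would simply be the one-line citation above.
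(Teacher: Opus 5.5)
Your first paragraph is exactly the paper's proof: Theorem \ref{finitespines} gives finiteness of $\spine(f)$, and \cite{ACS}*{Theorem 1.1} then yields cw$_2$-hyperbolicity. The self-contained sketch is not needed, since it essentially re-derives what \cite{ACS} already proves, and the citation suffices.
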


\begin{proof}
If $f\colon S\to S$ is a cw$_F$-hyperbolic surface homeomorphism, then Theorem \ref{finitespines} ensures it has at most a finite number of spines and \cite{ACS}*{Theorem 1.1} ensures it is cw$_2$-hyperbolic.
\end{proof}

\section{Singular stable/unstable foliations}

In this section, we prove  Theorem \ref{Bcw}, the equivalence between cw$_F$-hyperbolicity and pseudo-Anosov property with singularities consisting exclusively of spines.
%connected stable/unstable sets form a pair of transversal $C^0$ singular foliations with a finite number of singularities and conclude that $f$ is a pseudo-Anosov homeomorphism with spines. 
We begin with the definition of local singular charts and $C^0$ singular foliation. 

\begin{definition}[Local singular charts] For each $p\in \N$, let $\pi_p:\C \to \C$ be the map which sends $z$ to $z^p$ and define connected open subsets $\mc{D}_p$ $(p=1,2,\ldots)$ of $\C$ by $$\mc{D}_2=\{z\in \C : |\rea(z)|<1 ,|\im(z)|<1\},$$ $$\mc{D}_1=\pi_2(\mc{D}_2), \quad \text{ and } \quad \mc{D}_p = \pi_p^{-1}(\mc{D}_1).$$ 
By definition of $\pi_p$ and construction of $\mc{D}_p$, we have that $\pi_p:\mc{D}_p\to \mc{D}_1$ is a $p$-fold branched covering map for every $p\in \N$. 
Denote by $\mc{H}_2$ and $\mc{V}_2$ the horizontal and vertical foliations on $\mc{D}_2$ respectively. We define decompositions $\mc{H}_1$ and $\mc{V}_1$ of $\mc{D}_1$ as the projection of $\mc{H}_2$ and $\mc{V}_2$ by $\pi_2:\mc{D}_2\to \mc{D}_1$, respectively. For $p\geq 3$, we define decompositions $\mc{H}_p$ and $\mc{V}_p$ of $\mc{D}_p$ as the lifting of $\mc{H}_1$ and $\mc{V}_1$ by $\pi_p:\mc{D}_p\to \mc{D}_1$, respectively.
\end{definition}

\begin{definition}[$C^0$ singular foliation]\label{singfol}
A decomposition $\mc{F}$ of $M$ is called a $C^0$ singular foliation if each of its leaves is path-connected and for each $x\in M$ there exist $p(x)\in\N$ and a $C^0$ chart $\phi_x\colon U_x\to\C$ around $x$ satisfying:
\begin{enumerate}
    \item $\phi_x(x)=0$
    \item $\phi_x(U_x)=\mc{D}_{p(x)}$,
    \item if $U_x\cap L\neq \emptyset$ for some $L\in\mc{F}$, then $\phi_x$ sends each connected component of $U_x\cap L$ onto some element of $\mc{H}_{p(x)}$.
\end{enumerate}
\end{definition}

The number $p(x)$ is called the number of separatrices at $x$. A regular point is a point $x\in S$ with $p(x)=2$, and $x$ is a singular point if $p(x)\neq 2$. We denote the set of singular points of $\mc{F}$ by $\sing(\mc{F})$ and denote by $\mc{RF}$ the $C^0$ foliation on $S\setminus \sing(\mc{F})$ obtained from $\mc{F}$ by removing its singular points. A $C^0$ singular foliation $\mc{F}$ on $S$ is called minimal if every leaf of $\mc{RF}$ is dense in $S$.

In what follows, $f\colon S\to S$ is a cw$_F$-hyperbolic surface homeomorphism, $c$ is a cw-expansive constant of $f$, and $\eps\in(0,\frac{c}{2})$.  
%Let $\spine (f)$ denote the set of spines of $f$ and assume this is a finite set. 
We define $\mc{F}^\sigma_f$ as $$\mc{F}^\sigma_f=\{C^\sigma(x); \,\,x\in X\} \,\,\,\,\,\,\,\, (\sigma=s,u)$$
where
$$C^s(x):=\bigcup_{n\in\N}f^{-n}(C^s_{\eps}(f^n(x))) \,\,\,\,\,\, \text{and} \,\,\,\,\,\, C^u(x):=\bigcup_{n\in\N}f^{n}(C^u_{\eps}(f^{-n}(x))).$$
We say that $C^s(x)$ is the connected stable set of $x$ and that $C^u(x)$ is the connected unstable set of $x$. This defines decompositions of $M$ into connected stable/unstable sets of $f$. The following is the main result of this section:

\begin{theorem}\label{foliation}
If $f\colon S\to S$ is a cw$_F$-hyperbolic surface homeomorphism, then the decomposition $\mc{F}^\sigma_f$ $(\sigma=s,u)$ satisfies:
\begin{enumerate}
    \item\label{fol:1} $\mc{F}^\sigma_f$ is a $C^0$ singular foliation where singularities are the spines of $f$,
    \item\label{fol:2} each leaf of $\mc{F}^\sigma_f$ is homeomorphic to either $\mathbb{R}$ or $\mathbb{R}^+$,
    \item\label{fol:3} $\mc{F}^s_f$ is transverse to $\mc{F}^u_f$,
    \item\label{fol:4} $\mc{F}^\sigma_f$ is minimal.
\end{enumerate}
% Furthermore, 
% %if $\spine(f)\neq \emptyset$, then 
% there exists an Anosov automorphism $\tilde f$ of $\mathbb{T}^2$ and a factor map $\pi\colon (\mathbb{T}^2,\tilde f) \to (S,f)$ such that $\pi$ is at most 2-to-1 and $\# \pi^{-1}(p)=1$ if, and only if, $p\in \spine(f)$.
Furthermore, if $\spine(f)\neq\emptyset$, then there exists an Anosov automorphism
$\widetilde f$ of $\T^2$ and a factor map $\pi\colon(\T^2,\widetilde f)\to(S,f)$ such that
$\pi$ is at most $2$-to-$1$ and $\#\pi^{-1}(p)=1$ if, and only if, $p\in\spine(f)$. If instead
$\spine(f)=\emptyset$, then $\mc{F}^s_f,\mc{F}^u_f$ are nonsingular and $f$ is topologically
hyperbolic,  hence conjugate to an Anosov automorphism
of $\T^2$.
\end{theorem}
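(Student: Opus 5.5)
We first build local charts at each point of $S$ out of the local stable and unstable continua, and then use the dynamics to globalize them.

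\emph{Regular points.} Let $x$ be a regular point, so $p(x)=2$. By \cite{ACS}*{Lemma 2.12} the continua $C^s_\eps(x)$ and $C^u_\eps(x)$ are arcs through $x$. By Lemma 2.10 of \cite{ACS} they meet only at $x$ at a small scale, and the meeting is topologically transversal by Lemma 2.16 of \cite{ACS}. The cw-local-product-structure then gives a small neighborhood $U_x$ in which every $y$ has arcs $C^s_{\eps'}(y)$ and $C^u_{\eps'}(y)$ crossing the two arcs through $x$ exactly once.
\begin{itemize}
\item[] Define $\phi_x(y)$ by the pair of parameters of these two crossing points. This gives a homeomorphism onto $\mc{D}_2$ sending stable plaques to horizontals.
\end{itemize}
Injectivity of $\phi_x$ follows from cw$_F$-expansiveness together with the fact that spines are finite and isolated (Theorem \ref{finitespines}); shrinking $U_x$, it contains no spine.

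\emph{Spines.} Let $x$ be a spine. The separatrices $C^s_\eps(x)$ and $C^u_\eps(x)$ are single arcs. Near $x$, every other local stable arc turns around $x$ and meets $C^u_\eps(x)$ twice, and $\#(C^s\cap C^u)\le 2$ by Corollary \ref{cor:cw2}. Using this, together with the bi-asymptotic sector structure of \cite{ACS}*{Lemma 3.7}, we build the chart in three steps.
\begin{itemize}
\item[] Take the 2-fold ``unfolding'' of a punctured neighborhood of $x$.
\item[] Construct on it a product chart onto $\mc{D}_2$ as in the regular case.
\item[] Push this chart down by $\pi_2$ to obtain a chart onto $\mc{D}_1$.
\end{itemize}

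\emph{Globalizing.} The leaves $C^\sigma(x)$ are increasing unions of the arcs $f^{\mp n}(C^\sigma_\eps(f^{\pm n}x))$. Hence each leaf is an increasing union of arcs, and so is homeomorphic to $\R$, or to $\R^+$ when it contains a spine. A leaf cannot close up into a circle, since a circle leaf would contradict cw-expansiveness: its diameter stays bounded under iteration in the contracting direction, giving a nontrivial continuum inside a dynamical ball. Compatibility of the plaques with the charts follows from $f$-invariance. This proves items (\ref{fol:1})--(\ref{fol:3}).

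\emph{Minimality.} We expect item (\ref{fol:4}) to be the main obstacle. For minimality we use the spectral decomposition of cw-hyperbolic maps \cite{ACCV3} and argue that $f$ is transitive on $S$. The key input is that the closure of any unstable leaf is $f$-saturated and has nonempty interior, by the local product structure; an open invariant set then forces density of all leaves. The hardest point is to rule out several basic pieces on a connected surface. We would first try the argument used for expansive surface maps \cite{Hi2, L}: the nonwandering set is open, by the local product structure, and it is also closed, so it equals $S$.

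\emph{Case $\spine(f)\neq\emptyset$.} Take the double branched cover $\pi\colon \widetilde S\to S$ branched exactly at the spines. The number of spines is finite and even, since the foliations lift consistently, so this cover exists. It is designed so that the lifted foliations become nonsingular, with the spines turning into regular points. The lifted foliations are transverse and contracted/expanded, so any lift $\widetilde f$ of $f$ is expansive with local product structure, i.e.\ topologically hyperbolic. By Hiraide--Lewowicz \cite{Hi2, L}, $\widetilde f$ is pseudo-Anosov without singularities. Poincaré--Hopf then forces $\widetilde S=\T^2$, and $\widetilde f$ is conjugate to an Anosov automorphism (Franks--Manning, or Hiraide). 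Composing $\pi$ with this conjugacy gives the required factor map: it is at most $2$-to-$1$, and the fibres are single points exactly at the branch points, i.e.\ the spines.

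\emph{Case $\spine(f)=\emptyset$.} Every intersection of local stable and unstable continua is a single point, so $f$ is expansive with local product structure. The same theorems give $S=\T^2$ and a conjugacy to an Anosov automorphism.
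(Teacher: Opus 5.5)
Your items (1) and (3) and the factor-map step follow the paper's route: product charts from local stable/unstable continua, bi-asymptotic sectors at spines, a double cover branched at the spines, and Hiraide--Lewowicz for the lift. The genuine gap is item~(4), which is not proved. The facts you rely on do not follow from the local product structure.
\begin{itemize}
\item The closure of a single unstable leaf is not $f$-invariant, since $f$ carries $C^u(x)$ to $C^u(f(x))$, and nothing you cite gives it nonempty interior.
\item Openness of the nonwandering set is not a formal consequence of a global local product structure. For Anosov diffeomorphisms in higher dimensions, $\Omega(f)=M$ is the open transitivity problem. On surfaces, Hiraide and Lewowicz need genuinely two-dimensional arguments.
\item Ruling out several basic pieces, which you yourself flag as the hard point, is left undone.
\end{itemize}
The paper avoids all of this by reversing the order: minimality is deduced \emph{after} the covering step. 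Once the lift $\widetilde f$ is conjugate to a linear Anosov automorphism of $\T^2$, its stable and unstable foliations are linear of irrational slope, hence minimal. Every leaf of $\mc{F}^\sigma_f$ is the $\pi$-image of one or two leaves upstairs, so every leaf downstairs is dense. When $\spine(f)=\emptyset$, the conjugacy on $S$ itself gives the same conclusion. Your covering argument does not use minimality, so you can adopt this order directly.

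Several smaller steps also need repair.

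\emph{Item (2).} The nested union of arcs may stabilize to a compact arc joining two spines, a case you do not consider. The paper excludes it as follows. Spines are finitely many, hence periodic, so such a leaf would be periodic. Being of the form $f^{-m}(C^\sigma_\eps(f^m(x)))$, all its iterates would then have diameter at most $2\eps$, contradicting cw-expansiveness. Your circle argument does not work as written: smallness of iterates in the contracting direction alone does not place a continuum in a dynamical ball.

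\emph{The cover.} An arbitrary double cover branched at the spines need not admit a lift of $f$, and ``the foliations lift consistently'' is precisely what has to be shown. The paper uses the transverse-orientation double cover $S^*$ of the regular foliation on $S\setminus\spine(f)$, compactified over the spines. This cover exists without any parity argument. It is connected because the monodromy around a $1$-prong swaps the two orientations. It carries a lift of $f$ because $f$ preserves $\mc{F}^\sigma_f$.

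\emph{Expansiveness.} Single intersections of $C^s_c$ and $C^u_c$ together with the cw-local-product-structure do not immediately give expansiveness. Expansiveness concerns the possibly much larger sets $W^\sigma_c$. The paper invokes \cite{ACCV3}*{Theorem 3.1} (cw$_1$-hyperbolic implies topologically hyperbolic), both for the lift and in the spineless case. Also, Poincar\'e--Hopf alone still allows the Klein bottle, which must be excluded via Hiraide--Lewowicz.
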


% {\blue
% Before we prove Theorem~\ref{foliation}, we state the following immediate corollary.
% \begin{corollary}
%     Let $f$ be a pseudo-Anosov surface homeomorphism and let $A\in SL(2,\Z)$ define the Anosov automorphism
% $\widetilde f_A$ provided jointly by Theorem~\ref{foliation} and Theorem~\ref{Bcw}. Then the dilatation $\lambda$ of $f$ equals the
% spectral radius $\Lambda$ of $A$; equivalently, the topological entropy of $f$ is
% $h(f)=\log\lambda=\log\Lambda$.
% \end{corollary}
% \begin{proof}
% By Theorem~\ref{A}, $f$ is pseudo-Anosov with spines of dilatation
% $\lambda$
% By Theorem~\ref{foliation}, $f$ is a topological factor of $\widetilde f_A$
% through a map $\pi$ with at most two points in each fibre (or is a conjugacy). It is well known that finite-to-one factor maps preserve topological entropy (e.g. see
% \cite{W}), hence $h(f)=h(\widetilde f_A)=\log\Lambda$, the last equality being the entropy of a
% linear Anosov automorphism. 

% {\red [Here we need argument about proper lifting of measures]}

% Therefore $\lambda=\Lambda$ completing the proof.
% \end{proof}

% }

To construct the local charts around points of $S$, we need to recall results in \cite{ACS} relating spines and bi-asymptotic sectors. These sectors were introduced in \cite{APV} for $N$-expansive homeomorphisms on surfaces and were defined as being a disk bounded by the union of a local stable arc and a local unstable arc intersecting only twice. It is proved that every spine is contained in the interior of a (regular) bi-asymptotic sector \cite{ACS}*{Lemma 3.7}, which are sectors whose stable/unstable arcs forming it point outward its interior at their intersections (see \cite{ACS}*{Definition 3.1}). It is also known that bi-asymptotic sectors always contain spines in their interior and that this spine is unique when the sector is regular \cite{ACS}*{Proposition 3.6}. Now we proceed to the proof of the theorem.

\begin{proof}[Proof of Theorem \ref{foliation}]
First, note that each leaf of $\mc{F}^\sigma_f$ is path-connected. This follows by noting that each local stable/unstable continuum is path-connected (see \cite[Corollary 2.7]{ACS}) and using the definition of $C^{\sigma}(x)$ as the union of iterates of local stable/unstable continua. To prove that $\mc{F}^\sigma_f$ is a $C^0$ singular foliation we must exhibit local charts $\phi_x\colon U_x\to\C$ around each point $x\in S$ as in Definition \ref{singfol}. If $x$ is a regular point, then there is a neighborhood $V_x$ of $x$ containing only regular points (since $\spine(f)$ is finite) and such that the local stable/unstable continua of every point in $V_x$ is an arc that separates $V_x$ (see \cite{ACS}*{Propositions 2.13 and Lemma 2.14}). For each $w\in V_x$ let $L^{\sigma}_w$ be an arc in $C^{\sigma}_{\eps}(w)$ containing $w$ and connecting two points in the boundary of $V_x$ (which are the only intersections between $L^{\sigma}_w$ and the boundary of $V_x$). Using the cw-local-product-structure with sufficiently small constant, we obtain
%Let 
$U_x\subset V_x$ a neighborhood of $x$ such that 
$$L^s_y\cap L^u_z\neq\emptyset \,\,\,\,\,\, \text{whenever} \,\,\,\,\,\, y,z\in U_x.$$ Let $L^{\sigma}_{U_x}$ be the connected component of $L^{\sigma}_x\cap U_x$ that contains $x$. Thus, if $(y,z)\in L^s_{U_x}\times L^u_{U_x}$, then $L^u_y\cap L^s_z\neq\emptyset$. Note that $L^u_y\cap L^s_z$ is singleton, since otherwise they would form a bi-asymptotic sector and $V_x$ would contain a spine (see \cite{ACS}*{Proposition 3.6}), contradicting the choice of $V_x$. Thus, there is a well-defined map $$h\colon L^s_{U_x}\times L^u_{U_x}\to V_x$$ defined by $h(y,z)=L^u_y\cap L^s_z$. The lack of spines in $V_x$ also ensures the injectivity of $h$. Continuity of $h$ follows the same argument as in \cite{APV}*{Lemma 3.11}, %as well as the fact that $h$ is an open map. 
because limit of $\eps$-stable/unstable continuum (in Hausdorff distance) is itself $\eps$-stable/unstable continuum. We just proved that $h$ is a homeomorphism on its image.
Thus, we can use the inverse of $h$ to define the desired $C^0$-chart from a neighborhood of $x$ to $\mc{D}_2$.

Now assume that $x\in\spine(f)$ and recall that it is proved in \cite{ACS}*{Lemma 3.7} that $x$ belongs to the interior of a regular bi-asymptotic sector. The structure of local stable/unstable continua inside any regular bi-asymptotic sector is exactly the same as around the spines of the pseudo-Anosov diffeomorphism of $\mathbb{S}^2$. Indeed, there is a total order among the local stable/unstable continua inside these sectors (see \cite{ACS}*{Lemma 3.4}). From this, it is easy to obtain a local chart from any regular bi-asymptotic sector to $\mc{D}_1$. This proves that $\mc{F}^\sigma_f$ is a $C^0$ singular foliation where singularities are the spines of $f$ and finishes the proof of \eqref{fol:1}.

To prove \eqref{fol:2} and \eqref{fol:3}, we note that similar statements were already proved for local stable/unstable continua. If $x$ is a regular point, then there exists a homeomorphism $h\colon[-1,1]\to C^{\sigma}_{\eps}(x)$ such that $h^{\sigma}(0)=x$, and if $x$ is a spine, then there exists a homeomorphism $h^{\sigma}\colon[0,1]\to C^{\sigma}_{\eps}(x)$ such that $h(0)=x$. Also, for $\eps>0$, there exists $\delta\in(0,\eps)$ such that stable/unstable continua with diameter less than $\delta$ are necessarily $\eps$-stable/unstable continua (see \cite{ArD}*{Proposition 2.3.1}). 
Note that each set $f^{-n}(C^\sigma_{\eps}(f^n(x)))$
is an arc and that $(\bigcup_{n=1}^k f^{-n}(C^\sigma_{\eps}(f^n(x))))_{k\in\N}$ is a nested sequence of arcs.
%since if $z\in f^{-n}(C^\sigma_{\eps}(f^n(x)))\cap f^{-m}(C^\sigma_{\eps}(f^m(x)))$
%for $m\neq n$ then by the above results there is an open set $z\in V$ such that 
%$f^{-n}(C^\sigma_{\eps}(f^n(x)))\cap f^{-m}(C^\sigma_{\eps}(f^m(x)))\cap V$ is an arc. Therefore
This implies that $C^\sigma(x)$ is a homeomorphic image of either $\mathbb{R}$,
$\mathbb{R}^+$ or $[0,1]$. The case $[0,1]$ is impossible, because we have finitely many spines, and as a result $C^\sigma(x)$ would be periodic (as its endpoints would be periodic spines), which contradicts cw-expansiveness. This proves \eqref{fol:2}, and to prove \eqref{fol:3} we note that the transversality between any pair of local stable/unstable continua was proved in \cite{ACS}*{Lemma 2.16}. Thus, any intersection between a pair of stable/unstable leaves can be seen as an intersection between a small pieces of these leaves, which are local stable/unstable continua and, consequently, intersect transversally. 

Next we are going to prove \eqref{fol:4}. % as a consequence of the . 
By Definition~\ref{singfol} for any $x\in S\setminus \spine(f)$ and any two charts $(U,\psi)$, $(V,\phi)$ with $U,V\subset U_x$ we can find a transition chart $$\eta=\psi\circ \phi^{-1}\colon \phi(U\cap V)\to \psi (U\cap V)$$ of the form $$\eta(y,z)=(\alpha(y,z),\gamma(z)).$$
Therefore, we can introduce an equivalence relation on charts at $x$: $(U,\psi)$ and $(V,\phi)$ are related provided the transverse component $\gamma$ is orientation preserving. For each $x\in S\setminus \spine(f)$ we have exactly two equivalence classes. Following \cite[p.17]{HeHi} denote by $S^*$
the space of all equivalence classes of charts over all points $x\in S\setminus \spine(f)$.
For any open set $U\subset U_x$ we obtain two sets consisting of these classes in $S^*$
that have either the same orientation as $\phi_x$ or the opposite orientation. Defining such sets for any $U$ as above and any $x\in S\setminus \spine(f)$ we obtain a base of topology for $S^*$. In this way, we obtain a continuous 2-to-1 covering map $$\pi \colon S^*\to S\setminus \spine(f)$$ and this ensures that $S^*$ is a surface. Note that since $S\setminus \spine(f)$ is arcwise connected, there are at most two (arcwise) connected components in $S^*$. 

Assume first that $\spine(f)\neq\emptyset$.
We claim that
$S^*$ is connected.  Fix $p\in\spine(f)$ and let $\gamma\colon[0,1]\to S$ be a small simple loop in $S\setminus\spine(f)$ encircling a small disc containing
$p$ and no other spine. By \eqref{fol:1} a neighborhood of $p$ is modeled by the $1$-prong chart
$\mathcal{D}_1=\pi_2(\mathcal{D}_2)$, on which the two local transverse-orientation classes of $\mc{F}^\sigma_f$ are
interchanged by the monodromy of the branched cover $\pi_2$ along $\gamma$. Hence the
orientation class is reversed after one revolution around $\gamma$, so $\gamma$ does not lift
to a loop in $S^*$ but to a path joining the two points of $\pi^{-1}(\gamma(0))$. Therefore, the two sheets lie in a single arcwise-connected component, and so $S^*$ is connected.

Since points $p\in \spine(f)$ are isolated, we may compactify $S^*$ by adding points $p^*$ such that $p^*=\underset{n\to\infty}{\lim} x_n$ with $(x_n)_{n\in\N}\subset S^*$ satisfying $\underset{n\to\infty}{\lim} \pi(x_n)=p$. Denote by 
$$\tilde S=S^*\cup \{p^* : p\in \spine(f)\}$$ and extend $\pi$ onto $\tilde S$ by putting $\pi(p^*)=p$. Clearly, $\pi\colon\tilde S\to S$ remains continuous. 

Summing up, for each regular point $x\in S$, there is a neighborhood 
$U_x$ of $x$ such that its pre-image $\pi^{-1}(U_x)$ is the union of two disjoint open sets in $\widetilde{S}$ that project homeomorphically onto $U_x$, while for each $p\in\spine(f)$, there is a neighborhood $V_p$ of $p$ such that its pre-image $\pi^{-1}(V_p)$ is an open set in $\widetilde{S}$ where $\pi$ is 2-to-1, except obviously at $p$ which satisfies $\#\pi^{-1}(p)=1$. To prove that $\tilde S$ is compact, we will prove that each sequence $(z_n)_{n\in\N}\subset\tilde{S}$ has a converging subsequence. Its projection $(x_n)_{n\in\N}$ defined by $x_n = \pi(z_n)$ has a subsequence $(x_{n_k})_{k\in\N}$ converging to some $p\in S$. If $p$ is regular, then $p$ has a neighborhood $U$ such that $\tilde{\pi}^{-1}(U)$ is a disjoint union of two open sets homeomorphic to $U$. Thus, a subsequence of $(z_{n_k})_{k\in\N}$ must converge to a point in one of these sheets. If $p \in \spine(f)$, the definition of the added point $p^*$ ensures that $(z_{n_k})$ converges to $p^*$ in $\tilde{S}$.

The pre-image of each stable (unstable) leaf in $\mc{F}^s_f$ ($\mc{F}^u_f$) that do not contain a spine is the union of two disjoint leaves of the lifted foliation homeomorphic to $\R$, while the pre-image of a leaf that contains a spine is homeomorphic to $\mathbb{R}$. Thus, the lifted foliations do not contain spines and satisfy a local product structure (as in \cite{APV}*{Definition 3.10}). 

Since $\pi|_{S^*}$ is a covering map, $f$ lifts to a homeomorphism $\tilde{f}$ of $S^*$.
But $\spine(f)$ is a finite set consisting of periodic orbits of $f$ and $\pi$ is 1-to-1 onto $\spine(f)$, hence we obtain a well defined lift $\widetilde{f}\colon\widetilde{S}\to\widetilde{S}$ of
$f$
such that its foliations $\mc{F}^s_{\widetilde{f}}$ and $\mc{F}^u_{\widetilde{f}}$ coincide with the lifted foliations of $f$ by $\pi$. Since the lifted foliations satisfy a local-product-structure, it follows that $\widetilde{f}$ is cw$_1$-hyperbolic 
and, consequently, topologically hyperbolic by \cite[Theorem~3.1]{ACCV3}. The Hiraide/Lewowicz classification of topologically hyperbolic homeomorphisms on surfaces \cite{H} ensures that $\widetilde{f}$ is conjugate to an Anosov automorphism of $\mathbb{T}^2$. Thus, we can assume that $\tilde f$ itself is an Anosov automorphism and the factor map is $\pi$. Also, this proves, in particular, that $\mc{F}^s_{\widetilde{f}}$ and $\mc{F}^u_{\widetilde{f}}$ are minimal foliations, which ensure that their projections $\mc{F}^s_f$ and $\mc{F}^u_f$ are minimal foliations on $S$, proving (\ref{fol:4}) and concluding the proof for the case $\spine(f)\neq \emptyset$.

Finally, if $\spine(f)=\emptyset$ the foliations $\mc{F}^\sigma_f$ are nonsingular and the lift
construction above produces (on each component of $S^*$) a homeomorphism satisfying a
local-product-structure with no $1$-prongs, hence cw$_1$-hyperbolic and again, by
\cite[Theorem~3.1]{ACCV3}, topologically hyperbolic. The Hiraide/Lewowicz classification then
gives that $f$ is conjugate to an Anosov automorphism of $\T^2$, as stated, which also ensures that $\mc{F}^s_f$ and $\mc{F}^u_f$ are minimal and completes the proof. 
\end{proof}

A direct consequence of Theorem \ref{finitespines}, Theorem \ref{foliation}, and \cite[Proposition B]{H} is that $f$ is a pseudo-Anosov homeomorphism with all singularities being spines, which proves the first implication of Theorem \ref{Bcw}.  In what follows, we discuss the opposite direction, aiming at the proof that pseudo-Anosov surface homeomorphisms are cw$_F$-hyperbolic.
Let us recall some important definitions from \cite{BC} and \cite{FLPKM}. 

\begin{definition}[Transverse invariant measure] Let $\mc{F}$ be a foliation of $S$ with isolated singularities. By a \textit{transverse invariant measure} we mean a non-atomic, finite Borel measure $\mu$ that is defined on each arc transverse to the foliation, is positive on non-trivial arcs, and satisfies the following invariance property: if $\alpha,\beta\colon [0, 1] \to M$ are two arcs that are transverse to $\mc{F}$ that
are isotopic through transverse arcs whose endpoints remain in the
same leaf, then $\mu(\alpha([0,1])) = \mu(\beta([0,1]))$. If an arc passes through a singularity, the transversality pertains to all points of the arc belonging to a regular leaf, so we can also measure these arcs.
\end{definition}

\begin{definition}[Transverse measured foliation]
If $\mc{F}$ is a $C^0$ singular foliation and $\mu$ is a transverse invariant measure for $\mc{F}$, then the pair $(\mc{F},\mu)$ is called a \textit{transverse measured $C^0$ singular foliation}.
\end{definition}

The following definition of pseudo-Anosov surface homeomorphism with spines generalizes the definition given in \cite{Hi2} of pseudo-Anosov homeomorphism. The reader is referred to \cite[Section 13]{FLPKM} for further information on pseudo-Anosov homeomorphisms. In general, pseudo-Anosov homeomorphisms are defined assuming that $p(x)\geq2$ holds for every $x\in S$. This is necessary in the case where $f$ is expansive. In our definition, we rule out this assumption while keeping all other features of pseudo-Anosov homeomorphisms. This, in particular, allows for the existence of spines and includes the case of cw$_F$-hyperbolic homeomorphisms.

\begin{definition}[Pseudo-Anosov homeomorphism with spines] Let $S$ be a closed surface and $d$ be a Riemannian metric on it. We say that a homeomorphism $f\colon S\to S$ is \emph{pseudo-Anosov with spines} if there exist a constant $\lambda>1$, called dilatation factor, and a pair of transverse measured $C^0$ singular foliations $(\mc{F}^s,\mu^s)$ and $(\mc{F}^u,\mu^u)$ with a finite number of singularities, such that $\mc{F}^s$ is transverse to $\mc{F}^u$, $$f(\mc{F}^s,\mu^s)=(\mc{F}^s,\lambda^{-1}\mu^s) \quad \text{and} \quad f(\mc{F}^u,\mu^u)=(\mc{F}^u,\lambda\mu^u),$$ 
where $f(\mc{F,\mu})
= (\{f(L): L\in \mc{F}\},f_*\mu)$ and $f_*\mu$ is the push-forward measure.
\end{definition}

\begin{remark}
We do not require the invariant foliations of a pseudo-Anosov
homeomorphism with spines to be minimal. For a cw$_F$-hyperbolic surface homeomorphism $f$ this is not a loss:
minimality of $\mc{F}^s_f$ and $\mc{F}^u_f$ is part of the conclusion of Theorem~\ref{foliation} \eqref{fol:4}. Thus, on
the cw$_F$-hyperbolic side, our definition agrees with the classical one wherever the latter applies.
\end{remark}

Roughly speaking, being pseudo-Anosov means that $f$ preserves the singular foliations $\mc{F}^s$ and $\mc{F}^u$, contracts all the arcs on the leaves of $\mc{F}^s$ by $\lambda^{-1}$, and expands all the arcs on the leaves of $\mc{F}^u$ by $\lambda$. These contractions/expansions are seen using the measures $\mu^u$ and $\mu^s$ as in the above equalities. We can use these measures to define a metric $d_{\rho}$ on $S$ as follows (this is also based in Section 4 of \cite{C}). For each arc $\gamma\colon[0,1]\to S$ which at every regular point is transverse to both $\mc{F}^s$ and $\mc{F}^u$ (we call such arcs \emph{admissible}), let
$$\ell_\rho(\gamma) = \sup_{P} \sum_{i=0}^{m-1} \sqrt{\left[ \mu^u(\gamma([t_i, t_{i+1}])) \right]^2 + \left[ \mu^s(\gamma([t_i, t_{i+1}])) \right]^2},$$ where the supremum is taken over all partitions $P=\{0=t_0,t_1,\dots,t_{m-1},t_m=1\}$ of $[0,1]$.
We say that an admissible arc $\gamma$ is \emph{rectifiable} if $0<\ell_{\rho}(\gamma)<+\infty$ and let $\mc{R}$ be the set of all rectifiable arcs in $S$. Consider the 
function
$d_{\rho}\colon S\times S\to \mathbb{R}^+$ defined by
\begin{equation}\label{drhodef}
d_\rho(x,y) = \inf \big\{ \ell_\rho(\gamma) \;\big|\; \gamma\in\mc{R}, \,\,\, \gamma(0) = x, \,\,\text{ and }\,\, \gamma(1) = y \big\}.
\end{equation}
Observe that since the set of singularities is finite and $\mc{F}^s$, $\mc{F}^u$ are transversal, we have a finite cover of $S$ by open sets homeomorphic to either $\mc{D}_2$ or $\mc{D}_1$,
with traces of $\mc{F}^s$, $\mc{F}^u$ transversal, and so in particular, we can connect any two distinct points $x,y\in S$ by finite number of segments, each containing at most one singular point and transversal to both $\mc{F}^s$ and $\mc{F}^u$ at all other (regular) points. This shows that the set of rectifiable arcs between any two points is nonempty.

The following Lemma is inspired by the proofs for pseudo-Anosov diffeomorphisms (see \cite{FLPKM}, Section 9). Since we were unable to find a precise statement suitable for our setting, we present the proof for completeness.

\begin{lemma}\label{lem:drho}
The function $d_\rho$ is a metric uniformly equivalent to $d$. 
Furthermore, for each $x\in S\setminus \spine(f)$, there is an open set $U_x$ such that if
$I\subset \mc{F}^s\cap U_x$ $($resp. $I\subset \mc{F}^u\cap U_x)$ 
then $\diam_{d_\rho}(I)= \mu^u(I)$ $($resp. $\diam_{d_\rho}(I)=\mu^s(I))$.
\end{lemma}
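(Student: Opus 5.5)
The plan is to verify the metric axioms for $d_\rho$ directly from the definition (\ref{drhodef}), then compare $d_\rho$ with $d$ chart by chart using the finite cover of $S$ by singular charts described above, and finally compute diameters of short leaf segments inside a regular chart.

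\emph{Metric axioms.} Symmetry holds because reversing an admissible arc preserves $\ell_\rho$. The triangle inequality holds because concatenating two admissible arcs sharing an endpoint gives an admissible path; its length is the sum of the two lengths, since $\ell_\rho$ is additive under subdivision. If the concatenation is not simple, I will shorten it to an arc contained in it, which does not increase $\ell_\rho$ because the measures are monotone. Finiteness of $d_\rho$ was observed just before the lemma.

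\emph{Positivity.} For $x\neq y$, any arc $\gamma$ from $x$ to $y$ must leave a small chart around $x$ or cross a leaf not through $x$. In either case $\mu^s(\gamma)$ or $\mu^u(\gamma)$ is bounded below by the transverse measure of a fixed transverse arc, and this bound is positive because the measures are positive on non-trivial arcs.

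\emph{Uniform equivalence.} This is the main step, and I would prove it through local product coordinates. Near a regular point $x$, the chart $\phi_x$ into $\mc D_2$ can be replaced by the map
\[
y\mapsto\bigl(\mu^u[x,\,\pi^s(y)],\ \mu^s[x,\,\pi^u(y)]\bigr),
\]
where $\pi^s(y)$ and $\pi^u(y)$ are the product-structure projections onto the local stable and unstable arcs through $x$. This map is a homeomorphism onto a rectangle, because the measures are non-atomic and positive on arcs, so the distribution functions are strictly monotone and continuous. The invariance property of the transverse measures says exactly that, in these coordinates, $\mu^u$ and $\mu^s$ of a transverse arc are the variations of its two coordinates. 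Hence $\ell_\rho$ of an admissible arc in the chart is its Euclidean length in the new coordinates, and $d_\rho$ agrees locally with the Euclidean metric, up to the issue that competing paths might leave the chart. I would handle that issue with a Lebesgue number argument: paths leaving the chart have $\ell_\rho$ at least some $\eta>0$.

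At a spine, I would use the chart $\mc D_1=\pi_2(\mc D_2)$ and pull the measures back to $\mc D_2$. There $d_\rho$ is the quotient of a Euclidean metric under $z\mapsto -z$, which is again a metric compatible with the topology. Since the two metrics induce the same topology on the compact space $S$, the identity $(S,d)\to(S,d_\rho)$ is a homeomorphism of compact metric spaces and therefore uniformly continuous in both directions. This gives the uniform equivalence. The hardest part is showing that $d_\rho$ induces the original topology, which requires continuity of the coordinates built from the measures. I expect this to follow from non-atomicity together with the local product structure given by the singular charts.

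\emph{Diameter of leaf segments.} In the regular coordinates, a stable segment $I$ is a horizontal segment of Euclidean length $\mu^u(I)$; this is the convention in the statement, where stable leaves are measured by $\mu^u$. So $\diam_{d_\rho}(I)\le\mu^u(I)$, using admissible approximating arcs (for instance slightly tilted segments). Conversely, any admissible arc joining the endpoints of $I$ has $\mu^u$-variation at least $\mu^u(I)$, because its projection along unstable leaves covers $I$. Therefore $\ell_\rho\ge\mu^u(I)$, provided $U_x$ is chosen small enough, relative to $\eta$, that arcs leaving the chart are longer still. The unstable case is symmetric.
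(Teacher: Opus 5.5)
Your proposal is correct and follows essentially the paper's route. You check the metric axioms, verify that the two topologies agree locally in the regular charts $\mc{D}_2$ and the spine charts $\mc{D}_1$, and upgrade this to uniform equivalence by compactness. You then obtain the leaf-segment formula from the lower bound $\ell_\rho(\gamma)\ge\mu^u(\gamma)\ge\mu^u(I)$ together with an upper bound from admissible arcs of small $\mu^s$. Your flat measure coordinates, and the quotient by $z\mapsto -z$ at a spine, just make explicit the nested-rectangle comparison of Figure~\ref{fig:sketch}.

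The one place you are more careful than the paper is in shrinking $U_x$ so that arcs leaving the chart are automatically too long. That step is needed, because the paper's lower bound only treats arcs $\gamma\colon[0,1]\to U$ before taking the infimum over all admissible arcs.
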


\begin{proof}
It is clear that $d_\rho$ satisfies the triangle inequality since the union of two rectifiable arcs is a rectifiable arc itself. Also, since in $D_i$ we have concrete coordinates, it is clear that there is a finite set of arcs either in $\mc{F}^s$ or $\mc{F}^u$, such that any arc $\gamma$ connecting $x,y$ must have a subset isotopic to one of them. This implies that $d_\rho(x,y)>0$ for $x\neq y$. So indeed, $d_\rho$ is a metric. We can use topological rectangles in $\mc{D}_1$ or $\mc{D}_2$ to show that any open set in $d$ contains a ball in $d_\rho$ and vice-versa (see Figure~\ref{fig:sketch}). Then $d$ and $d_\rho$ define the same topology, and so are uniformly equivalent as $S$ is compact.

\begin{figure}[ht]
\centering
\resizebox{0.40\linewidth}{!}{%
\begin{tikzpicture}[x=0.24pt,y=0.24pt,yscale=-1,xscale=1]
\useasboundingbox (53,0) rectangle (583,515);

\draw [color={rgb, 255:red, 30; green, 0; blue, 255 }  ,draw opacity=1 ]   (170.76,14.37) .. controls (120.93,203.73) and (120.93,256.05) .. (145.85,418) ;
\draw [color={rgb, 255:red, 30; green, 0; blue, 255 }  ,draw opacity=1 ]   (220.59,14.37) .. controls (170.76,203.73) and (170.76,256.05) .. (195.68,418) ;
\draw [color={rgb, 255:red, 30; green, 0; blue, 255 }  ,draw opacity=1 ]   (270.42,14.37) .. controls (220.59,203.73) and (220.59,256.05) .. (245.51,418) ;
\draw [color={rgb, 255:red, 30; green, 0; blue, 255 }  ,draw opacity=1 ]   (320.25,14.37) .. controls (270.42,203.73) and (270.42,256.05) .. (295.34,418) ;
\draw [color={rgb, 255:red, 30; green, 0; blue, 255 }  ,draw opacity=1 ]   (370.09,14.37) .. controls (320.25,203.73) and (320.25,256.05) .. (345.17,418) ;
\draw [color={rgb, 255:red, 30; green, 0; blue, 255 }  ,draw opacity=1 ]   (419.92,14.37) .. controls (370.09,203.73) and (370.09,256.05) .. (395,418) ;
\draw [color={rgb, 255:red, 30; green, 0; blue, 255 }  ,draw opacity=1 ]   (469.75,14.37) .. controls (419.92,203.73) and (419.92,256.05) .. (444.83,418) ;
\draw [color={rgb, 255:red, 30; green, 0; blue, 255 }  ,draw opacity=1 ]   (519.58,14.37) .. controls (469.75,203.73) and (469.75,256.05) .. (494.66,418) ;
\draw [color={rgb, 255:red, 255; green, 5; blue, 36 }  ,draw opacity=1 ]   (101,49.25) .. controls (367.59,11.88) and (340.19,39.29) .. (556.95,54.24) ;
\draw [color={rgb, 255:red, 255; green, 5; blue, 36 }  ,draw opacity=1 ]   (101,99.08) .. controls (367.59,61.71) and (340.19,89.12) .. (556.95,104.07) ;
\draw [color={rgb, 255:red, 255; green, 5; blue, 36 }  ,draw opacity=1 ]   (101,148.91) .. controls (367.59,111.54) and (340.19,138.95) .. (556.95,153.9) ;
\draw [color={rgb, 255:red, 255; green, 5; blue, 36 }  ,draw opacity=1 ]   (101,198.75) .. controls (367.59,161.37) and (340.19,188.78) .. (556.95,203.73) ;
\draw [color={rgb, 255:red, 255; green, 5; blue, 36 }  ,draw opacity=1 ]   (101,248.58) .. controls (367.59,211.2) and (340.19,238.61) .. (556.95,253.56) ;
\draw [color={rgb, 255:red, 255; green, 5; blue, 36 }  ,draw opacity=1 ]   (101,298.41) .. controls (367.59,261.03) and (340.19,288.44) .. (556.95,303.39) ;
\draw [color={rgb, 255:red, 255; green, 5; blue, 36 }  ,draw opacity=1 ]   (101,348.24) .. controls (367.59,310.86) and (340.19,338.27) .. (556.95,353.22) ;
\draw [color={rgb, 255:red, 255; green, 5; blue, 36 }  ,draw opacity=1 ]   (101,398.07) .. controls (367.59,360.69) and (340.19,388.1) .. (556.95,403.05) ;
\draw [color={rgb, 255:red, 30; green, 0; blue, 255 }  ,draw opacity=1 ]   (569.41,14.37) .. controls (519.58,203.73) and (519.58,256.05) .. (544.49,418) ;
\draw  [fill={rgb, 255:red, 0; green, 0; blue, 0 }  ,fill opacity=1 ] (321.15,207.75) .. controls (321.15,205.26) and (319.13,203.24) .. (316.64,203.24) .. controls (314.14,203.24) and (312.12,205.26) .. (312.12,207.75) .. controls (312.12,210.25) and (314.14,212.27) .. (316.64,212.27) .. controls (319.13,212.27) and (321.15,210.25) .. (321.15,207.75) -- cycle ;
\draw [color={rgb, 255:red, 0; green, 0; blue, 0 }  ,draw opacity=1 ][line width=2.25]    (256.12,138.21) .. controls (236.25,194.2) and (243.48,235.75) .. (245.29,291.75) ;
\draw [color={rgb, 255:red, 0; green, 0; blue, 0 }  ,draw opacity=1 ][line width=2.25]    (400.63,140.01) .. controls (380.76,196.01) and (387.99,237.56) .. (389.79,293.55) ;
\draw [color={rgb, 255:red, 0; green, 0; blue, 0 }  ,draw opacity=1 ][line width=2.25]    (256.12,138.21) .. controls (322.96,121.95) and (348.25,132.79) .. (400.63,140.01) ;
\draw [color={rgb, 255:red, 0; green, 0; blue, 0 }  ,draw opacity=1 ][line width=2.25]    (245.29,291.75) .. controls (312.12,275.49) and (337.41,286.33) .. (389.79,293.55) ;
\draw    (325.71,135.98) -- (316.79,196.02) ;
\draw [shift={(316.5,198)}, rotate = 278.44] [color={rgb, 255:red, 0; green, 0; blue, 0 }  ][line width=0.75]    (10.93,-3.29) .. controls (6.95,-1.4) and (3.31,-0.3) .. (0,0) .. controls (3.31,0.3) and (6.95,1.4) .. (10.93,3.29)   ;
\draw [shift={(326,134)}, rotate = 98.44] [color={rgb, 255:red, 0; green, 0; blue, 0 }  ][line width=0.75]    (10.93,-3.29) .. controls (6.95,-1.4) and (3.31,-0.3) .. (0,0) .. controls (3.31,0.3) and (6.95,1.4) .. (10.93,3.29)   ;
\draw    (248,209.97) -- (304,209.03) ;
\draw [shift={(306,209)}, rotate = 179.05] [color={rgb, 255:red, 0; green, 0; blue, 0 }  ][line width=0.75]    (10.93,-3.29) .. controls (6.95,-1.4) and (3.31,-0.3) .. (0,0) .. controls (3.31,0.3) and (6.95,1.4) .. (10.93,3.29)   ;
\draw [shift={(246,210)}, rotate = 359.05] [color={rgb, 255:red, 0; green, 0; blue, 0 }  ][line width=0.75]    (10.93,-3.29) .. controls (6.95,-1.4) and (3.31,-0.3) .. (0,0) .. controls (3.31,0.3) and (6.95,1.4) .. (10.93,3.29)   ;
\draw  [color={rgb, 255:red, 248; green, 120; blue, 28 }  ,draw opacity=1 ][line width=2.25]  (174.26,207.75) .. controls (174.26,129.12) and (238,65.38) .. (316.64,65.38) .. controls (395.27,65.38) and (459.01,129.12) .. (459.01,207.75) .. controls (459.01,286.38) and (395.27,350.13) .. (316.64,350.13) .. controls (238,350.13) and (174.26,286.38) .. (174.26,207.75) -- cycle ;
\draw [color={rgb, 255:red, 0; green, 0; blue, 0 }  ,draw opacity=1 ][line width=2.25]    (185.63,54.77) .. controls (141.66,178.69) and (157.65,270.63) .. (161.64,394.56) ;
\draw [color={rgb, 255:red, 0; green, 0; blue, 0 }  ,draw opacity=1 ][line width=2.25]    (505.43,58.77) .. controls (461.46,182.69) and (477.45,274.63) .. (481.44,398.55) ;
\draw [color={rgb, 255:red, 0; green, 0; blue, 0 }  ,draw opacity=1 ][line width=2.25]    (185.63,54.77) .. controls (333.54,18.79) and (389.5,42.78) .. (505.43,58.77) ;
\draw [color={rgb, 255:red, 0; green, 0; blue, 0 }  ,draw opacity=1 ][line width=2.25]    (161.64,394.56) .. controls (309.55,358.58) and (365.52,382.56) .. (481.44,398.55) ;
\draw    (156.87,61) -- (134.13,397) ;
\draw [shift={(134,399)}, rotate = 273.87] [color={rgb, 255:red, 0; green, 0; blue, 0 }  ][line width=0.75]    (10.93,-3.29) .. controls (6.95,-1.4) and (3.31,-0.3) .. (0,0) .. controls (3.31,0.3) and (6.95,1.4) .. (10.93,3.29)   ;
\draw [shift={(157,59)}, rotate = 93.87] [color={rgb, 255:red, 0; green, 0; blue, 0 }  ][line width=0.75]    (10.93,-3.29) .. controls (6.95,-1.4) and (3.31,-0.3) .. (0,0) .. controls (3.31,0.3) and (6.95,1.4) .. (10.93,3.29)   ;
\draw    (483,409) -- (154,409) ;
\draw [shift={(152,409)}, rotate = 360] [color={rgb, 255:red, 0; green, 0; blue, 0 }  ][line width=0.75]    (10.93,-3.29) .. controls (6.95,-1.4) and (3.31,-0.3) .. (0,0) .. controls (3.31,0.3) and (6.95,1.4) .. (10.93,3.29)   ;
\draw [shift={(485,409)}, rotate = 180] [color={rgb, 255:red, 0; green, 0; blue, 0 }  ][line width=0.75]    (10.93,-3.29) .. controls (6.95,-1.4) and (3.31,-0.3) .. (0,0) .. controls (3.31,0.3) and (6.95,1.4) .. (10.93,3.29)   ;
\draw    (470,440) .. controls (508.81,419.1) and (556.52,351.68) .. (486.07,316.53) ;
\draw [shift={(485,316)}, rotate = 25.92] [color={rgb, 255:red, 0; green, 0; blue, 0 }  ][line width=0.75]    (10.93,-3.29) .. controls (6.95,-1.4) and (3.31,-0.3) .. (0,0) .. controls (3.31,0.3) and (6.95,1.4) .. (10.93,3.29)   ;
\draw [color={rgb, 255:red, 248; green, 120; blue, 28 }  ,draw opacity=1 ]   (156,448) .. controls (195.6,418.3) and (164.63,340.58) .. (202.82,309.91) ;
\draw [shift={(204,309)}, rotate = 143.13] [color={rgb, 255:red, 248; green, 120; blue, 28 }  ,draw opacity=1 ][line width=0.75]    (10.93,-3.29) .. controls (6.95,-1.4) and (3.31,-0.3) .. (0,0) .. controls (3.31,0.3) and (6.95,1.4) .. (10.93,3.29)   ;

\draw (318.64,206.64) node [anchor=north west][inner sep=0.75pt]  [font=\large,xscale=0.45,yscale=0.45]  {$x$};
\draw (324,156.4) node [anchor=north west][inner sep=0.75pt]  [font=\large,xscale=0.45,yscale=0.45]  {$\mu ^{s}  >\delta $};
\draw (248,213.4) node [anchor=north west][inner sep=0.75pt]  [font=\large,xscale=0.45,yscale=0.45]  {$\mu ^{u}  >\delta $};
\draw (59,199.4) node [anchor=north west][inner sep=0.75pt]  [font=\large,xscale=0.45,yscale=0.45]  {$\mu ^{s} < \frac{\varepsilon }{2}$};
\draw (275,410.4) node [anchor=north west][inner sep=0.75pt]  [font=\large,xscale=0.45,yscale=0.45]  {$\mu ^{u} < \frac{\varepsilon }{2}$};
\draw (398,444) node [anchor=north west][inner sep=0.75pt]  [xscale=0.45,yscale=0.45] [align=left] {By triangle inequality\\this square is in};
\draw (397,486.4) node [anchor=north west][inner sep=0.75pt]  [xscale=0.45,yscale=0.45]  {$B_{d_{\rho}}( x,\varepsilon )$};
\draw (79,452) node [anchor=north west][inner sep=0.75pt]  [xscale=0.45,yscale=0.45] [align=left] {By triangle inequality\\this contains};
\draw (187,472.4) node [anchor=north west][inner sep=0.75pt]  [xscale=0.45,yscale=0.45]  {$B_{d_{\rho}}( x,\delta )$};
\draw (398,66.4) node [anchor=north west][inner sep=0.75pt]  [font=\large,color={rgb, 255:red, 248; green, 120; blue, 28 }  ,opacity=1 ,xscale=0.45,yscale=0.45]  {$B( x,\gamma )$};

\end{tikzpicture}
}
\hfill
\resizebox{0.49\linewidth}{!}{%
\begin{tikzpicture}[x=0.24pt,y=0.24pt,yscale=-1,xscale=1]
\useasboundingbox (40,0) rectangle (640,480);
\draw [color={rgb, 255:red, 30; green, 0; blue, 255 }  ,draw opacity=1 ]   (52,197.06) -- (327.79,197.06) ;
\draw [color={rgb, 255:red, 255; green, 5; blue, 36 }  ,draw opacity=1 ]   (327.79,197.06) -- (603.58,197.06) ;
\draw  [fill={rgb, 255:red, 0; green, 0; blue, 0 }  ,fill opacity=1 ] (332.6,197.06) .. controls (332.6,194.41) and (330.45,192.25) .. (327.79,192.25) .. controls (325.13,192.25) and (322.98,194.41) .. (322.98,197.06) .. controls (322.98,199.72) and (325.13,201.87) .. (327.79,201.87) .. controls (330.45,201.87) and (332.6,199.72) .. (332.6,197.06) -- cycle ;
\draw [color={rgb, 255:red, 255; green, 5; blue, 36 }  ,draw opacity=1 ]   (603.58,233.94) .. controls (63.22,243.56) and (72.84,150.56) .. (603.58,153.77) ;
\draw [color={rgb, 255:red, 255; green, 5; blue, 36 }  ,draw opacity=1 ]   (600.37,266.01) .. controls (-10.53,272.42) and (-13.74,116.89) .. (601.98,123.3) ;
\draw [color={rgb, 255:red, 255; green, 5; blue, 36 }  ,draw opacity=1 ]   (600.37,298.08) .. controls (-79.48,302.89) and (-81.08,83.22) .. (606.79,91.24) ;
\draw [color={rgb, 255:red, 255; green, 5; blue, 36 }  ,draw opacity=1 ]   (597.17,342.97) .. controls (-138.81,346.18) and (-117.96,46.34) .. (608.39,59.17) ;
\draw [color={rgb, 255:red, 30; green, 0; blue, 255 }  ,draw opacity=1 ]   (59.15,159.81) .. controls (599.5,150.19) and (589.88,243.19) .. (59.15,239.98) ;
\draw [color={rgb, 255:red, 30; green, 0; blue, 255 }  ,draw opacity=1 ]   (62.36,127.74) .. controls (673.26,121.33) and (676.47,276.86) .. (60.75,270.45) ;
\draw [color={rgb, 255:red, 30; green, 0; blue, 255 }  ,draw opacity=1 ]   (62.36,95.67) .. controls (742.21,90.86) and (743.81,310.53) .. (55.94,302.51) ;
\draw [color={rgb, 255:red, 30; green, 0; blue, 255 }  ,draw opacity=1 ]   (65.56,50.78) .. controls (801.54,47.57) and (780.69,347.41) .. (54.34,334.58) ;
\draw [line width=2.25]    (338,230) .. controls (348,227) and (47,205) .. (313,164) ;
\draw [line width=2.25]    (338,230) .. controls (399,227) and (593,189) .. (313,164) ;
\draw    (242.99,186.18) -- (315.01,192.82) ;
\draw [shift={(317,193)}, rotate = 185.26] [color={rgb, 255:red, 0; green, 0; blue, 0 }  ][line width=0.75]    (10.93,-3.29) .. controls (6.95,-1.4) and (3.31,-0.3) .. (0,0) .. controls (3.31,0.3) and (6.95,1.4) .. (10.93,3.29)   ;
\draw [shift={(241,186)}, rotate = 5.26] [color={rgb, 255:red, 0; green, 0; blue, 0 }  ][line width=0.75]    (10.93,-3.29) .. controls (6.95,-1.4) and (3.31,-0.3) .. (0,0) .. controls (3.31,0.3) and (6.95,1.4) .. (10.93,3.29)   ;
\draw    (341.99,203.18) -- (414.01,209.82) ;
\draw [shift={(416,210)}, rotate = 185.26] [color={rgb, 255:red, 0; green, 0; blue, 0 }  ][line width=0.75]    (10.93,-3.29) .. controls (6.95,-1.4) and (3.31,-0.3) .. (0,0) .. controls (3.31,0.3) and (6.95,1.4) .. (10.93,3.29)   ;
\draw [shift={(340,203)}, rotate = 5.26] [color={rgb, 255:red, 0; green, 0; blue, 0 }  ][line width=0.75]    (10.93,-3.29) .. controls (6.95,-1.4) and (3.31,-0.3) .. (0,0) .. controls (3.31,0.3) and (6.95,1.4) .. (10.93,3.29)   ;
\draw  [color={rgb, 255:red, 248; green, 120; blue, 28 }  ,draw opacity=1 ][line width=2.25]  (183.29,197.06) .. controls (183.29,117.26) and (247.98,52.56) .. (327.79,52.56) .. controls (407.59,52.56) and (472.29,117.26) .. (472.29,197.06) .. controls (472.29,276.87) and (407.59,341.56) .. (327.79,341.56) .. controls (247.98,341.56) and (183.29,276.87) .. (183.29,197.06) -- cycle ;
\draw [line width=2.25]    (147,27) -- (146,366) ;
\draw [line width=2.25]    (509,366) -- (146,366) ;
\draw [line width=2.25]    (510,27) -- (147,27) ;
\draw [line width=2.25]    (510,27) -- (509,366) ;
\draw    (155.49,356.66) -- (318.51,210.34) ;
\draw [shift={(320,209)}, rotate = 138.09] [color={rgb, 255:red, 0; green, 0; blue, 0 }  ][line width=0.75]    (10.93,-3.29) .. controls (6.95,-1.4) and (3.31,-0.3) .. (0,0) .. controls (3.31,0.3) and (6.95,1.4) .. (10.93,3.29)   ;
\draw [shift={(154,358)}, rotate = 318.09] [color={rgb, 255:red, 0; green, 0; blue, 0 }  ][line width=0.75]    (10.93,-3.29) .. controls (6.95,-1.4) and (3.31,-0.3) .. (0,0) .. controls (3.31,0.3) and (6.95,1.4) .. (10.93,3.29)   ;
\draw    (134,364) -- (134,32) ;
\draw [shift={(134,30)}, rotate = 90] [color={rgb, 255:red, 0; green, 0; blue, 0 }  ][line width=0.75]    (10.93,-3.29) .. controls (6.95,-1.4) and (3.31,-0.3) .. (0,0) .. controls (3.31,0.3) and (6.95,1.4) .. (10.93,3.29)   ;
\draw [shift={(134,366)}, rotate = 270] [color={rgb, 255:red, 0; green, 0; blue, 0 }  ][line width=0.75]    (10.93,-3.29) .. controls (6.95,-1.4) and (3.31,-0.3) .. (0,0) .. controls (3.31,0.3) and (6.95,1.4) .. (10.93,3.29)   ;
\draw    (172,416) .. controls (135.74,407.18) and (147.5,392.6) .. (169.63,376.02) ;
\draw [shift={(171,375)}, rotate = 143.53] [color={rgb, 255:red, 0; green, 0; blue, 0 }  ][line width=0.75]    (10.93,-3.29) .. controls (6.95,-1.4) and (3.31,-0.3) .. (0,0) .. controls (3.31,0.3) and (6.95,1.4) .. (10.93,3.29)   ;
\draw [color={rgb, 255:red, 248; green, 120; blue, 28 }  ,draw opacity=1 ]   (464,399) .. controls (426.57,393.09) and (429.89,360.01) .. (419.49,328.44) ;
\draw [shift={(419,327)}, rotate = 71.03] [color={rgb, 255:red, 248; green, 120; blue, 28 }  ,draw opacity=1 ][line width=0.75]    (10.93,-3.29) .. controls (6.95,-1.4) and (3.31,-0.3) .. (0,0) .. controls (3.31,0.3) and (6.95,1.4) .. (10.93,3.29)   ;

\draw (269,170.4) node [anchor=north west][inner sep=0.75pt]  [xscale=0.45,yscale=0.45]  {$\mu ^{s} =\delta $};
\draw (378,184.4) node [anchor=north west][inner sep=0.75pt]  [xscale=0.45,yscale=0.45]  {$\mu ^{u} =\delta $};
\draw (247,266.4) node [anchor=north west][inner sep=0.75pt]  [font=\large,xscale=0.45,yscale=0.45]  {$\mu ^{s} < \frac{\varepsilon }{2}$};
\draw (66,198.4) node [anchor=north west][inner sep=0.75pt]  [font=\large,xscale=0.45,yscale=0.45]  {$\mu ^{u} < \frac{\varepsilon }{2}$};
\draw (175,398) node [anchor=north west][inner sep=0.75pt]  [xscale=0.45,yscale=0.45] [align=left] {This square is contained\\in};
\draw (194,419.4) node [anchor=north west][inner sep=0.75pt]  [xscale=0.45,yscale=0.45]  {$B_{d_{\rho}}( x,\varepsilon )$};
\draw (470,391) node [anchor=north west][inner sep=0.75pt]  [xscale=0.45,yscale=0.45] [align=left] {Contains};
\draw (546,389.4) node [anchor=north west][inner sep=0.75pt]  [xscale=0.45,yscale=0.45]  {$B_{d_{\rho}}( x,\delta )$};

\end{tikzpicture}
}

    \caption{Sketch of the proof that balls of $d$ and $d_\rho$ can be included in each other in $\mc{D}_2$ and $\mc{D}_1$.}
    \label{fig:sketch}
\end{figure}

Note that if 
$z\in S\setminus \spine(f)$, then by the definition of $C^0$ singular foliation, there are $U^u_z$ and $U^s_z$
and associated charts sending these sets to $\mc{D}_2$.
Put $U=U_z^s\cap U_z^u$.
We prove that if
$s_{x,y}\subset U$ is a stable arc connecting $x$ to $y$, then $d_\rho(x,y)=\mu^u(s_{x,y})$, while if $u_{x,y}$ is an unstable arc connecting $x$ to $y$, then $d_\rho(x,y)=\mu^s(u_{x,y})$. 
Indeed, let $\gamma\colon[0,1]\to U$ be an arbitrary admissible arc connecting $x$ to $y$ and $P=\{0=t_0,t_1,\dots,t_m=1\}$ be any partition of $[0,1]$. We have
\begin{eqnarray*}
\sum_{i=0}^{m-1} \sqrt{\left[ \mu^u(\gamma([t_i, t_{i+1}])) \right]^2 + \left[ \mu^s(\gamma([t_i, t_{i+1}])) \right]^2} &\geq& \sum_{i=0}^{m-1} \mu^u(\gamma([t_i, t_{i+1}]))\\
&\geq& \mu^u(\gamma([0,1])) \geq \mu^u(s_{x,y}).
\end{eqnarray*}
The last inequality follows from the definition of the transverse measure $\mu^u$ since $\gamma$ intersects at least all unstable arcs that $s_{x,y}$ intersects in $\mc{D}_2$. Taking the supremum over all partitions $P$ of $[0,1]$ we obtain $\ell_{\rho}(\gamma)\geq \mu^u(s_{x,y})$ and taking the infimum over all admissible curves connecting $x$ and $y$ we obtain $d_{\rho}(x,y)\geq \mu^u(s_{x,y})$.

To prove the reverse inequality, for each $t>0$
we approximate $s_{x,y}$ by an admissible arc $\gamma_t$ isotopic to it, keeping endpoints in the same leaves of $\mc{F}^u$ as $s_{x,y}$ and requiring $\mu^s(\gamma_t([0,1]))<t$.
Thus, we have 
$$
\mu^u(s_{x,y})\leq d_\rho(x,y)\leq \ell_{\rho}(\gamma_t)=\sqrt{\mu^u(s_{x,y})^2+t^2}\leq \mu^u(s_{x,y})+t
$$
and letting $t\to0^+$ we obtain $d_\rho(x,y)\leq\mu^u(s_{x,y})$, completing the proof of this case. The case of unstable arc is symmetric.
\end{proof}

 \begin{remark}
We note that on $\mc{D}_1$ we can get points $x$ and $y$ arbitrarily close to the stable arc of the spine and inside the same stable arc turning around the spine, so their distance $d_{\rho}(x,y)$ will not be given by $\mu^u(s_{x,y})$. Indeed, if $x$ and $y$ also belong to an unstable arc $u_{x,y}$ and $\mu^s(u_{x,y})<\mu^u(s_{x,y})$, it follows that $d_{\rho}(x,y)$ will be determined by $\mu^s(u_{x,y})$ and will increase when iterated. This means, in particular, that the metric $d_{\rho}$ is not hyperbolic. 
\end{remark}

Now we obtain an equivalence between all singularities of a pseudo-Anosov homeomorphism being spines and cw$_F$-hyperbolicity.

\begin{proof}[Proof of Theorem \ref{Bcw}]
The fact that cw$_F$-hyperbolicity implies $f$ is pseudo-Anosov with all singularities being spines was proved above as a direct consequence of Theorem \ref{finitespines}, Theorem \ref{foliation}, and \cite[Proposition B]{H}. 

For the proof of converse implication, assume that $f$ is a pseudo-Anosov homeomorphism with all singularities being spines. Let $(\mc{F}^s,\mu^s)$ and $(\mc{F}^u,\mu^u)$ denote its transverse measured stable/unstable $C^0$ singular foliations and $\lambda>1$ be its dilatation factor. We will prove that $f$ is cw$_2$-hyperbolic. First, we note that $\mc{F}^s/\mc{F}^u$ satisfy a local-product-structure since on regular points there is a local chart homeomorphic to $\mc{D}_2$ and on spines there is a local chart homeomorphic to $\mc{D}_1$ (see Definition~\ref{singfol}). In both cases, in a small neighborhood, each stable leaf intersects all unstable leaves and also each unstable leaf intersects all stable leaves.  More precisely, for each $\eps' > 0$, there exists a universal $\delta > 0$ such that if $d(x,y) < \delta$, then the local stable leaf segment of $x$ and the local unstable leaf segment of $y$ within these charts intersect at a point $z \in S$. Furthermore, by choosing $\delta$ sufficiently small, the sub-arcs $s_{x,z} \subset \mc{F}^s$ and $u_{y,z} \subset \mc{F}^u$ connecting $x$ to $z$ and $y$ to $z$, respectively, can be guaranteed to have small transverse measures:
$$z\in s_{x,z}\cap u_{y,z}, \,\,\, \mu^s(u_{y,z})<\eps', \,\,\, \text{and} \,\,\, \mu^u(s_{x,z})<\eps'.$$
Also note that pieces of stable/unstable leaves with small transverse measures are contained in local stable/unstable continua of $f$. Indeed, 
if $\mu^u(s_{x,z})<\eps'$, then
$$d_\rho(f^k(x),f^k(z))\leq\mu^u(f^k(s_{x,z}))=\lambda^{-k}\mu^u(s_{x,z})<\eps'$$
for every $k\in\N$, that is, $z\in C^s_{\eps}(x)$ on the metric $d_{\rho}$. Also, if 
$\mu^s(u_{y,z})<\eps$, then
$$d_\rho(f^{-k}(y),f^{-k}(z))\leq\mu^s(f^{-k}(u_{y,z}))=\lambda^{-k}\mu^s(u_{y,z})<\eps'$$
for every $k\in\N$, that is, $z\in C^u_{\eps}(y)$ on the metric $d_{\rho}$.

Thus, for each $\eps>0$ we can choose $\eps'\in(0,\eps)$ given by the fact that $d$ and $d_{\rho}$ are uniformly equivalent metrics and $\delta\in(0,\eps')$ such that if $d(x,y)<\delta$, then there is $z\in S$ and arcs $s_{x,z}\subset\mc{F}^s$, $u_{y,z}\subset\mc{F}^u$ such that 
$$z\in s_{x,z}\cap u_{y,z}, \,\,\, \mu^s(u_{y,z})<\eps', \,\,\, \text{and} \,\,\, \mu^u(s_{x,z})<\eps'.$$ 
As noted above, we conclude that $z\in C^s_{\eps'}(x)\cap C^u_{\eps'}(y)$ on the metric $d_{\rho}$, which ensures that $z\in C^s_{\eps}(x)\cap C^u_{\eps}(y)$ on the metric $d$. This proves that $f$ satisfies the cw-local-product-structure.

Now we prove cw-expansiveness.  Let $\eps>0$ be small enough so that any ball of radius $2\eps$ in $S$ is contained within a canonical neighborhood homeomorphic to either a regular chart $\mc{D}_2$ or a $1$-prong singular chart $\mc{D}_1$. Because $f$ satisfies the cw-local-product-structure, associated to this $\eps$ there exists a uniform constant $\delta\in(0,\eps)$ ensuring that any two points closer than $\delta$ intersect via $\eps$-stable/unstable continua. Let $\{V_1, \dots, V_m\}$ be a finite open cover of $S$ by local charts such that the diameter of each $V_i$ is strictly less than $\delta$. This sequential choice of scales ($\operatorname{diam}(V_i) < \delta < \eps$) guarantees that for any connected set contained entirely within a chart $V_i$, the local projections $\pi^s$ and $\pi^u$ along the stable and unstable leaves are rigorously well-defined, continuous, and do not cross the boundary of the parameterized domain.  Note that when $V_i$ is homeomorphic to $\mc{D}_1$, the projections on the stable/unstable regular leaves are not well-defined maps since each other regular leaf intersects them on two distinct points, but the projections on the stable/unstable leaves of the spine are well-defined since on $\mc{D}_1$ unstable/stable regular leaves intersect the stable/unstable leaves of the spine in a single point.

Since $S$ is compact, let $\alpha > 0$ be a Lebesgue number for this cover, meaning that any subset of $S$ with diameter less than $\alpha$ is entirely contained in at least one chart $V_i$. Since $d$ and $d_{\rho}$ are equivalent, there exists $\alpha'\in(0,\alpha)$ such that $d_{\rho}(x,y)<\alpha'$ implies $d(x,y)<\alpha$.
It follows direct from definition that cw-expansiveness is invariant for equivalent metrics, so it is enough to prove cw-expansiveness using the metric $d_{\rho}$. 

Let $C\subset S$ be a non-trivial continuum satisfying \begin{equation}\label{diamcw}\operatorname{diam}_{d_\rho}(f^n(C)) \leq \alpha' \,\,\,\,\,\, \text{for every} \,\,\,\,\,\, n \in \mathbb{Z}.
\end{equation}
This ensures that $\diam(f^n(C))<\alpha$ and, consequently, $f^n(C)$ is contained in some  $V_{i(n)}$ for every $n\in\Z$. Thus, we can project $f^n(C)$ along the stable/unstable leaves onto the unstable/stable leaf of $x_{i(n)}$ inside $B(x_{i(n)},2\eps)$. For each $n\in\Z$, let $\mc{F}^u_{2\eps}(x_{i(n)})$ denote the connected component of $x_{i(n)}$ in $\mc{F}^u(x_{i(n)})\cap B(x_{i(n)},2\eps)$, $\mc{F}^s_{2\eps}(x_{i(n)})$ denote the connected component of $x_{i(n)}$ in $\mc{F}^s(x_{i(n)})\cap B(x_{i(n)},2\eps)$, and
$$\pi^s_n\colon f^n(C)\to\mc{F}^u_{2\eps}(x_{i(n)}), \,\,\,\, \pi^u_n\colon f^n(C)\to\mc{F}^s_{2\eps}(x_{i(n)})$$ be the projections along stable and unstable leaves.
Let $\pi^s=\pi^s_0$ and $\pi^u=\pi^u_0$.
Since $C$ is a  non-trivial continuum, at least one of its projections $\pi^s(C)$ or $\pi^u(C)$ must be a non-trivial continuum (a small arc) contained within a leaf. 

Assume that $\pi^s(C)$ is a non-trivial continuum, which ensures that $\mu^s(\pi^s(C))>0$. The metric $d_{\rho}$ inside the  $V_{i(n)}$ without spines satisfies: if $x'=\pi^s_n(x)$ and $y'=\pi^s_n(y)$ are the projections through the stable leaves of $x,y\in f^n(C)$, then $d_{\rho}(x,y)\geq\mu^s(u_{x',y'})$, where $u_{x',y'}$ is the unstable arc connecting $x'$ to $y'$ inside $\mc{F}^u(x_{i(n)})$, in particular transversal to $\mc{F}^s$ and so $\mu^s$ can be applied on it. Indeed, since $x$ and $x'$ lie on the same stable leaf, and also $y$ and $y'$ lie on the same stable leaf, any admissible curve $\gamma$ connecting $x$ to $y$ must cross at least the family of stable leaves that $u_{x',y'}$ intersects. A similar argument as in the proof of Lemma \ref{lem:drho} ensures that $d_{\rho}(x,y)\geq\mu^s(u_{x',y'})$. It follows from this inequality, by taking the supremum of $d_{\rho}(x,y)$ with $x,y\in f^n(C)$, that if $f^n(C)$ is contained in a  $V_{i(n)}$ without spines, then $$\diam_{d_\rho}(f^n(C))
\geq \mu^s(\pi^s_n(f^n(C))).$$
Also, since $f$ preserves the stable foliation $\mc{F}^s$, it follows that
$$\pi^s_n(f^n(C))=f^n(\pi^s(C)) \,\,\,\,\,\, \text{for every} \,\,\,\,\,\, n\in\N.$$ Thus, we have
\begin{eqnarray*}
\diam_{d_\rho}(f^n(C))
&\geq& \mu^s(\pi^s_n(f^n(C)))\\
&=&\mu^s(f^n(\pi^s(C)))\\
&=&\lambda^n\mu^s(\pi^s(C)).
\end{eqnarray*}
Since $\lambda>1$,  
if we can find such
$n\in\N$ large enough, then
$$\diam_{d_\rho}(f^n(C))\geq\lambda^n\mu^s(\pi^s(C))>\alpha',$$
contradicting (\ref{diamcw}). 
So assume that such $n$ does not exist, that is, $f^n(C)$ is contained in  $V_{i(n)}$ for every $n\in\N$, where the foliation in  $V_{i(n)}$ is represented by $\mc{D}_1$ and $x_{i(n)}$ is a spine. Since there are finitely many spines, we may assume for simplicity that $x_{i(n)}=x_{i(0)}$. 
Indeed, since $f^n(C)\subset V_{i(n)}$ for all $n$ and there are finitely many spines, the
orbit of $C$ stays in a neighborhood of a single spine, which is then periodic and so replacing $f$
by a suitable power we may assume this spine $x_{i(0)}$ is fixed. 
Namely cw-expansiveness and the bound \eqref{diamcw} are invariant under
powers.
Note also that $D_0$ defined below satisfies $\mu^s(D_0)=\mu^s(\pi^s(C))>0$,
since $\pi^s(C)$ is a non-trivial sub-continuum of a leaf.
For each $n\in\N$, let $$D_n=\pi^s(f^n(C))\subset \mc{F}^u(x_{i(0)})$$
and let $D$ be the closure of the connected component of $x_{i(0)}$ in $\mc{F}^u(x_{i(0)})\cap  V_{i(0)}$. By definition we have $D_n\subset D$ for every $n\in\N$.
Repeating the calculation as before, we see that $\mu^s(D_n)=\lambda^n\mu^s(D_0)\to \infty$ when $n\to+\infty$, while $\mu^s(D)<\infty$. A contradiction, ruling out the case
$f^n(C)\subset  V_{i(0)}$ for every $n\in\N$. The case $\pi^u(C)$ is a non-trivial continuum is similar, iterating backwards instead. Hence, we proved cw-expansiveness.

The proof that $f$ is cw$_2$-expansive follows from noting that local stable/unstable continua are contained in small arcs inside leaves of $\mc{F}^s/\mc{F}^u$ and that on small neighborhoods of the points $x_i$, there are at most two distinct points of intersection between any pair of stable/unstable arcs since these neighborhoods are homeomorphic to $\mc{D}_2$ and $\mc{D}_1$. This proves that $f$ is cw$_2$-hyperbolic.
\end{proof}

\section{Classification}

In what follows, we discuss how to classify the surfaces that admit cw$_F$-hyperbolic homeomorphisms and obtain the conjugacy with the linear models. We are ready to prove Theorem \ref{A}.

\begin{proof}[Proof of Theorem \ref{A}]
First, since the boundary of a surface must be an invariant set of a homeomorphism and there is no cw-expansive homeomorphisms in a one-dimensional manifold (this is a consequence of \cite{K2}*{Theorem 1.6}), it is clear that $S$ is a surface without boundary. By a well-known classification result (e.g. see Theorem 7.2 in Chapter 1 of \cite{Massey}) any closed and orientable surface is either homeomorphic to a sphere or to a connected sum of tori, while any closed and non-orientable surface is homeomorphic to the connected sum of either a projective plane or Klein bottle and a compact, orientable surface.

Let $f\colon S\to S$ be a cw$_F$-hyperbolic surface homeomorphism. 
If $\spine(f)=\emptyset$, then Theorem~\ref{foliation} ensures that $f$ is topologically hyperbolic, so $f$ is topologically conjugate to an Anosov automorphism of $\mathbb{T}^2$.

For the remaining case, assume that $\spine(f)\neq\emptyset$.
By Theorem \ref{foliation} there exists an Anosov automorphism $\tilde f$ of $\mathbb{T}^2$ and a factor map $\pi\colon (\mathbb{T}^2,\tilde f) \to (S,f)$ such that $\pi$ is at most 2-to-1 and $\# \pi^{-1}(p)=1$ if, and only if, $p\in \spine(f)$.
Note that the Riemann-Hurwitz formula \cite{J} gives us 
$$0=\chi(\mathbb{T}^2)=2\chi(S)-\#\spine(f).$$ 
Thus, the above equality reduces to
\begin{equation}
    \chi(S)=\frac{1}{2}\# \spine(f).
    \label{euler:spines}
\end{equation}
This ensures, in particular, that $\chi(S)\geq0$ and that there are no cw$_F$-hyperbolic homeomorphisms on surfaces with negative characteristic. Combining this with equality \eqref{euler:spines} (cf. Theorem 8.2 in Chapter 1 of \cite{Massey}) we obtain that $S$ is either a sphere with $\chi(S)=2$, or a projective plane with $\chi(S)=1$, or the Torus with $\chi(S)=0$, or the Klein bottle with $\chi(S)=0$.

If $\chi(S)=0$, then  
equality (\ref{euler:spines}) ensures that $\spine(f)=\emptyset$ and we can repeat the argument from the proof of Theorem~\ref{foliation} obtaining that $f$ is topologically hyperbolic. The classification of topologically hyperbolic surface homeomorphisms allow us to conclude that in the Torus $f$ is conjugate to an Anosov automorphism while in the Klein bottle there are no cw$_F$-hyperbolic homeomorphisms. 

Now assume that $S=\mathbb{S}^2$ is a sphere with $\chi(S)=2$. In this case, it follows from (\ref{euler:spines}) that there exist exactly four distinct spines of $f$. Recall that by Theorem \ref{foliation}, $f$ is a factor of an Anosov automorphism $(\T^2,\tilde f)$ by a factor map $\pi$. Also, $\tilde f$ induces a homeomorphism $g\colon \mathbb{S}^2\to\mathbb{S}^2$, using the antipodal factor map, denoted by $\sigma\colon\mathbb{T}^2\to\mathbb{S}^2$. Thus, both $\pi$ and $\sigma$ are branched double covering maps from $\mathbb{T}^2$ to $\mathbb{S}^2$ with exactly four branch points. 
If we remove the spines and their pre-images, that is, we let $$\mathbb{S}^2_{\pi}=\mathbb{S}^2\setminus\spine(f), \,\,\,\,\,\, \mathbb{T}^{2}_{\pi}=\mathbb{T}^2\setminus\pi^{-1}(\spine(f))$$ 
$$\mathbb{S}^2_{\sigma}=\mathbb{S}^2\setminus\spine(g)\,\,\,\,\,\, \text{and} \,\,\,\,\,\, \mathbb{T}^{2}_{\sigma}=\mathbb{T}^2\setminus\sigma^{-1}(\spine(g)),$$ we have that both $\pi\colon\mathbb{T}^{2}_{\pi}\to\mathbb{S}^2_{\pi}$ and $\sigma\colon\mathbb{T}^{2}_{\sigma}\to\mathbb{S}^2_{\sigma}$ are double covering maps. 

We will prove that $\pi$ and $\sigma$ are isomorphic, that is, there exists a homeomorphism $h\colon\mathbb{T}^2\to \mathbb{T}^2$ such that $\sigma\circ h=\pi$. This means that $h$ sends fibers of $\pi$ into fibers of $\sigma$, i.e., if $x\in\pi^{-1}(y)$, then $h(x)\in\sigma^{-1}(y)$ since $\sigma(h(x))=\pi(x)=y$. This is equivalent to proving that $h$ conjugates the deck transformations of $\pi$ and $\sigma$, that is, $h\circ\tau_{\pi}=\tau_{\sigma}\circ h$.
Deck transformations $\tau_{\pi}$ of $\pi$ and $\tau_{\sigma}$ of $\sigma$ can be defined as follows: for each $x\in\mathbb{T}^2_{\pi}$, we let $\tau_{\pi}(x)$ be the point in $\pi^{-1}(\pi(x))$ that is distinct from $x$, and for each $x\in\mathbb{T}^2_{\sigma}$, we let $\tau_{\sigma}(x)$ be the point in $\sigma^{-1}(\sigma(x))$ that is distinct from $x$ (recall that $\pi\colon\mathbb{T}^{2}_{\pi}\to\mathbb{S}^2_{\pi}$ and $\sigma\colon\mathbb{T}^{2}_{\sigma}\to\mathbb{S}^2_{\sigma}$ are double covering maps, so $\pi^{-1}(\pi(x))$ and $\sigma^{-1}(\sigma(x))$ contain exactly two points). Extend continuously both maps to $\mathbb{T}^2$ by letting $\tau_{\pi}(x)=x$ for each $x\in\pi^{-1}(\spine(f))$ and $\tau_{\sigma}(x)=x$ for each $x\in\sigma^{-1}(\spine(g))$. Thus, the maps $\tau_{\pi}$ and $\tau_{\sigma}$ have exactly four fixed points, which are the branching points of the covering maps $\pi$ and $\sigma$ on $\T^2$. Also, $\tau_{\pi}$ and $\tau_{\sigma}$ 
are involutions, that is, $\tau_{\pi}^2=id$ and $\tau_{\sigma}^2=id$. More precisely, since $\sigma$ identifies antipodal points, it follows that $\tau_{\sigma}$ lifts to $\R^2$ as $\tilde\tau_{\sigma}=-\text{id}$. 

We prove that $\tau_{\pi}$ lifts to $\R^2$ as $\tilde\tau_{\pi}(x)=-x+q$ with $q\in\R^2$. Since $\tau_{\pi}$ is a homeomorphism of $\mathbb{T}^2$, and its lift $\tilde \tau_{\pi}\colon\R^2\to\R^2$ is commuting with deck transformation,
 we have 
$$\tilde\tau_{\pi}(x+k)-\tilde\tau_{\pi}(x)\in \Z^2 \,\,\,\,\,\, \text{for every} \,\,\,\,\,\, x\in\R^2 \,\,\,\,\,\, \text{and}  \,\,\,\,\,\, k\in \Z^2.$$ This implies that there is a $2\times2$ matrix $Q$
with integer entries such that
$$\tilde\tau_{\pi}(x+k)-\tilde\tau_{\pi}(x)=Qk\,\,\,\,\,\, \text{for every} \,\,\,\,\,\, x\in\R^2 \,\,\,\,\,\, \text{and}  \,\,\,\,\,\, k\in \Z^2.$$
It follows that
\begin{eqnarray*}
\tilde\tau^2_{\pi}(x+k)-\tilde\tau^2_{\pi}(x)&=&
\tilde\tau_{\pi}(\tilde\tau_{\pi}(x+k))-\tilde\tau_{\pi}(\tilde\tau_{\pi}(x)+k)+\tilde\tau_{\pi}(\tilde\tau_{\pi}(x)+k)-\tilde\tau_{\pi}(\tilde\tau_{\pi}(x))\\
&=&\tilde\tau_{\pi}(\tilde\tau_{\pi}(x)+k+Qk-k)-\tilde\tau_{\pi}(\tilde\tau_{\pi}(x)+k)+Qk\\
&=&Q(Qk-k)+Qk\\
&=&Q^2 k.
\end{eqnarray*}
But $\tilde\tau_{\pi}^2$ is a lift of identity, so $\tilde\tau_{\pi}^2(x)-x=c\in \Z^2$ for every $x\in \R^2$. In particular,
\begin{eqnarray*}
\tilde\tau^2_{\pi}(x+k)-\tilde\tau^2_{\pi}(x)&=&
\tilde\tau^2_{\pi}(x+k)-(x+k)-\tilde\tau^2_{\pi}(x)+x+k\\
&=&c-c+k\\
&=&k.
\end{eqnarray*}
Then we have $Q^2k=k$ for every $k\in \Z^2$ and therefore $Q^2=I$ (by linearity of $Q^2$).
Let 
$P(x)=\tilde\tau_{\pi}(x)-Qx$ and note that
$$P(x+k)=\tilde\tau_{\pi}(x)+Qk-Q(x+k)=P(x) \,\,\,\,\,\, \text{for every} \,\,\,\,\,\, x\in\R^2 \,\,\,\,\,\, \text{and} \,\,\,\,\,\, k\in \Z^2,$$
which proves, in particular, that $P$ is bounded. 
Note that $\tau_{\pi}$ is commuting with toral automorphism $\tilde f$ defined by the hyperbolic matrix $A$, and this ensures that
$$A(\tilde\tau_{\pi}(x))-\tilde\tau_{\pi}(A(x))=b \,\,\,\,\,\, \text{for some} \,\,\,\,\,\, b\in \Z^2.$$
Since $\tilde\tau_{\pi}(x)=Qx+P(x)$, it follows that
$$(AQ-QA)x=P(A(x))-AP(x)+b \,\,\,\,\,\, \text{for every} \,\,\,\,\,\, x\in \R^2.$$
But since $P$ is bounded, the right hand side of the above equation is also bounded, 
which is possible only if $AQ=QA$. This ensures that $b=AP(x)-P(A(x))$ for every $x\in\R^2$, hence $b=AP(0)-P(A(0))=AP(0)-P(0)$. Let $R(x)=P(x)-P(0)$ and note that
\begin{eqnarray*}
AR(x)&=&AP(x)-AP(0)\\
&=&P(A(x))+b-AP(0)\\
&=&P(A(x))-P(0)\\
&=&R(A(x)).
\end{eqnarray*}
This ensures that 
\begin{equation}\label{AnR}
A^nR(x)=R(A^n(x)) \,\,\,\,\,\, \text{for every} \,\,\,\,\,\, n\in \Z.
\end{equation}
But since $A$ is a hyperbolic matrix and $R$ is bounded, we must have $R\equiv 0$
and consequently $P\equiv P(0)$ and $\tilde\tau_{\pi}(x)=Qx+P(0)$.
Indeed, if this was not the case, then denoting $v=R(x)\neq 0$ we either have
$$\lim_{n\to \infty}||A^nv||=+\infty \,\,\,\,\,\, \text{or} \,\,\,\,\,\, \lim_{n\to\infty}||A^{-n}v||=+\infty,$$ which contradicts (\ref{AnR}) and the fact that $R$ is bounded.
Thus, $\tau_{\pi}$ is a toral automorphism 
with four fixed points, which yields $\det (Q-I)=4$. Since additionally $Q^2=I$, we must have $Q=-I$. This proves that 
$$\tilde\tau_{\pi}(x)=-x+P(0) \,\,\,\,\,\, \text{for every} \,\,\,\,\,\, x\in\R^2,$$
that is, $\tilde\tau_{\pi}(x)=-x+q$ with $q=P(0)\in\R^2$.

Now define $\tilde h\colon \mathbb{R}^{2}\to \mathbb{R}^{2}$ by $\tilde h(x)=x-\frac{q}{2}$. Note that $\tilde h\circ\tilde\tau_{\pi}=\tilde\tau_{\sigma}\circ\tilde h$ because
$$\tilde h(\tilde{\tau}_{\pi}(x))=\tilde h(-x+q)=(-x+q)-\frac{q}{2}=-x+\frac{q}{2} \,\,\,\,\,\, \text{and}$$
$$\tilde{\tau}_{\sigma}(\tilde h(x))=\tilde{\tau}_{\sigma}\left(x-\frac{q}{2}\right)=-\left(x-\frac{q}{2}\right)=-x+\frac{q}{2}.$$
It follows that $\tilde h$ induces a homeomorphism $h\colon \mathbb{T}^{2}\to \mathbb{T}^{2}$ satisfying $h\circ\tau_{\pi}=\tau_{\sigma}\circ h$ and, consequently, $\sigma\circ h=\pi$. 
This means that $h$ is an isomorphism between the coverings $\pi$ and $\sigma$, sending fibers of $\pi$ into fibers of $\sigma$. In particular, $h$ sends the four branching points of $\pi$ to the four branching points of $\sigma$. Since $\tilde h$ is a translation by $-\frac{q}{2}$, $\tilde h(\frac{q}{2})=(0,0)$, and the branching points of $\sigma$ are the projections of the points $\{(0,0), (1/2, 0), (0, 1/2), (1/2, 1/2)\}$, it follows that the branching points of $\pi$ are projections of the points $\{q/2, q/2+(1/2, 0), q/2+(0, 1/2), q/2+(1/2, 1/2)\}$. Let $v_1=(0,0)$, $v_2=(1/2, 0)$, $v_3=(0,1/2)$, and $v_4=(1/2,1/2)$ and for each $i\in\{1,2,3,4\}$ let $\tilde h_i(x)=x-\frac{q}{2}-v_i$. Note that $\tilde h_i\circ\tilde\tau_{\pi}=\tilde\tau_{\sigma}\circ\tilde h_i-2v_i$ because
$$\tilde h_i(\tilde{\tau}_{\pi}(x))=\tilde h_i(-x+q)=(-x+q)-\frac{q}{2}-v_i=-x+\frac{q}{2}-v_i \,\,\,\,\,\, \text{and}$$
$$\tilde{\tau}_{\sigma}(\tilde h_i(x))=\tilde{\tau}_{\sigma}\left(x-\frac{q}{2}-v_i\right)=-\left(x-\frac{q}{2}-v_i\right)=-x+\frac{q}{2}+v_i.$$ Since $2v_i\in\Z^2$ for every $i\in\{1,2,3,4\}$, it follows that each $\tilde h_i$ induces a homeomorphism $h_i\colon\T^2\to\T^2$ satisfying $h_i\circ\tau_{\pi}=\tau_{\sigma}\circ h_i$ and, consequently, $\sigma\circ h_i=\pi$, that is, $h_i$ is an isomorphism between the coverings $\pi$ and $\sigma$.

We prove that $\tilde f$ commutes with one of the $(h_i)_{i=1}^4$. From $A\circ\tilde\tau_{\pi}=\tilde\tau_{\pi}\circ A+b$ we obtain
$A(-x+q)=-A(x)+q+b$, hence $A(q)=q+b$ with $b\in\Z^2$. This implies that $(A-I)(\frac{q}{2})=\frac{b}{2}$ and consequently
\begin{equation}\label{A-I}
(A-I)\left(\frac{q}{2}+v_i\right)=\frac{b}{2}+(A-I)(v_i)
\end{equation}
for every $i\in\{1,2,3,4\}$. Since $b\in\Z^2$, it follows that there exists $j\in\{1,2,3,4\}$ such that $-\frac{b}{2}\equiv v_j\mod\Z^2$. This means that $-\frac{b}{2}$ projects to one of the branching points of $\sigma$ in $\T^2$. Note that $(A-I)$ is an isomorphism of $\R^2$ because $A$ is hyperbolic, and since $A\in SL(2,\Z)$ we have that $A-I$ preserves points in $\frac{1}{2}\Z^2$, which means that the branching points of $\sigma$ in $\T^2$ are permuted by $(A-I)\mod\Z^2$. Let $i\in\{1,2,3,4\}$ be such that $$(A-I)(v_i)\equiv v_j\equiv-\frac{b}{2}\pmod{\Z^2}$$ and as a consequence
$$(A-I)\left(\frac{q}{2} + v_i\right) = \frac{b}{2} + (A-I)v_i \equiv 0 \pmod{\mathbb{Z}^2}.$$
Thus,
$$A\circ\tilde h_i(x)=A\left(x-\frac{q}{2}-v_i\right)=A(x)-\frac{1}{2}A(q)-A(v_i)$$
and
\begin{eqnarray*}
\tilde h_i(A(x))&=&A(x)-\frac{q}{2}-v_i\\
&=&A(x)-\frac{1}{2}(A(q)-b)-v_i\\
&=&A(\tilde h_i(x))+A(v_i)-v_i+\frac{b}{2}\\
&=&A(\tilde h_i(x))+(A-I)\left(\frac{q}{2}+v_i\right).
\end{eqnarray*}
This and equality (\ref{A-I}) ensure that $h_i\circ\tilde f=\tilde f\circ h_i$. We let $h=h_i$ for simplicity.

This is enough to obtain a homeomorphism 
$\psi\colon\mathbb{S}^2\to\mathbb{S}^2$ such that $\psi\circ\pi = \sigma\circ h$.
Indeed, for each $y\in\mathbb{S}^2$ and $x\in \pi^{-1}(y)$, let $\psi(y)=\sigma(h(x))$. If $y\in\spine(f)$, then $\pi^{-1}(y)$ is a single point, hence $\psi(y)$ is well-defined. When $y\in\mathbb{T}^{2}_{\pi}$, $\psi(y)$ does not depend of the choice of $x\in \pi^{-1}(y)$ because $\pi^{-1}(y)=\{x,\tau_{\pi}(x)\}$ and $\sigma(h(x))=\sigma(h(\tau_{\pi}(x)))$ since $h$ is an isomorphism between the coverings $\pi$ and $\sigma$.
Now we prove that $\psi$ is a homeomorphism. Since $\sigma\colon\mathbb{T}^2 \to \mathbb{S}^2$ and $h\colon\mathbb{T}^2 \to \mathbb{T}^2$ are surjective, for each $z \in \mathbb{S}^2$, there exists $x \in \mathbb{T}^2$ such that $\sigma(h(x)) = z$, and since $\psi \circ \pi = \sigma\circ h$, it follows that $\psi(\pi(x)) = z$, so $\psi$ is surjective. Suppose $\psi(y_1) = \psi(y_2)$ and consider $x_1 \in \pi^{-1}(y_1)$ and $x_2 \in \pi^{-1}(y_2)$. Then $$\sigma(h(x_1))=\psi(\pi(x_1))=\psi(y_1)=\psi(y_2)=\psi(\pi(x_2))=\sigma(h(x_2)).$$ 
This means that $h(x_1)$ and $h(x_2)$ are on the same fiber of $\sigma$, which ensures that $x_1$ and $x_2$ are on the same fiber of $\pi$. Therefore, $\pi(x_1)=\pi(x_2)$, which means $y_1=y_2$. This proves injectivity of $\psi$. The continuity of $\psi$ follows from the continuity of $\pi$, $\sigma$ and $h$, and equality $\psi\circ\pi = \sigma\circ h$.

We conclude this case proving that $\psi$ conjugates $f$ and $g$. Indeed,
by hypothesis, we have the factor relations:
\begin{equation}
\pi \circ\tilde f = f \circ \pi \quad \text{and} \quad \sigma \circ\tilde f = g \circ \sigma.
\end{equation}
We wish to show that $\psi \circ f = g \circ \psi$. Let $y \in \mathbb{S}^2$ and $x \in\mathbb{T}^2$ be such that $\pi(x) = y$. Then we have
\begin{align*}
(\psi \circ f)(y) &= \psi(f(\pi(x))) \\
&= \psi(\pi(\tilde f(x))) \tag{by factor property of $f$} \\
&= \sigma(h(\tilde f(x))) \tag{since $\psi \circ \pi = \sigma\circ h$} \\
&= \sigma(\tilde f(h(x))) \tag{since $h\circ\tilde f=\tilde f\circ h$} \\
&= g(\sigma(h(x))) \tag{by factor property of $g$} \\
&= g(\psi(\pi(x))) \tag{since $\sigma\circ h = \psi \circ \pi$} \\
&= (g \circ \psi)(y).
\end{align*}
Since this holds for all $y \in\mathbb{S}^2$, we have $\psi \circ f = g \circ \psi$. This proves that all cw$_F$-hyperbolic homeomorphisms of the Sphere are topologically conjugate to a cw-Anosov automorphism. The diagram in the following figure illustrates the above equalities and commutations.

\begin{figure}
\label{fig2}
	\centering
    \tikzset{every picture/.style={line width=0.75pt}} %set default line width to 0.75pt        
    \begin{tikzpicture}[x=0.75pt,y=0.75pt,yscale=-1,xscale=1]
    \draw    (192,198.25) -- (277.23,153.18) ;
    \draw [shift={(279,152.25)}, rotate = 152.13] [color={rgb, 255:red, 0; green, 0; blue, 0 }  ][line width=0.75]    (10.93,-3.29) .. controls (6.95,-1.4) and (3.31,-0.3) .. (0,0) .. controls (3.31,0.3) and (6.95,1.4) .. (10.93,3.29)   ;
    \draw    (322,199.25) -- (407.23,154.18) ;
    \draw [shift={(409,153.25)}, rotate = 152.13] [color={rgb, 255:red, 0; green, 0; blue, 0 }  ][line width=0.75]    (10.93,-3.29) .. controls (6.95,-1.4) and (3.31,-0.3) .. (0,0) .. controls (3.31,0.3) and (6.95,1.4) .. (10.93,3.29)   ;
    \draw    (195,212.25) -- (293,212.25) ;
    \draw [shift={(295,212.25)}, rotate = 180] [color={rgb, 255:red, 0; green, 0; blue, 0 }  ][line width=0.75]    (10.93,-3.29) .. controls (6.95,-1.4) and (3.31,-0.3) .. (0,0) .. controls (3.31,0.3) and (6.95,1.4) .. (10.93,3.29)   ;
    \draw    (307,148.25) -- (405,148.25) ;
    \draw [shift={(407,148.25)}, rotate = 180] [color={rgb, 255:red, 0; green, 0; blue, 0 }  ][line width=0.75]    (10.93,-3.29) .. controls (6.95,-1.4) and (3.31,-0.3) .. (0,0) .. controls (3.31,0.3) and (6.95,1.4) .. (10.93,3.29)   ;
    \draw    (183,96.25) -- (183,190.25) ;
    \draw [shift={(183,192.25)}, rotate = 270] [color={rgb, 255:red, 0; green, 0; blue, 0 }  ][line width=0.75]    (10.93,-3.29) .. controls (6.95,-1.4) and (3.31,-0.3) .. (0,0) .. controls (3.31,0.3) and (6.95,1.4) .. (10.93,3.29)   ;
    \draw    (195,74.25) -- (280.23,29.18) ;
    \draw [shift={(282,28.25)}, rotate = 152.13] [color={rgb, 255:red, 0; green, 0; blue, 0 }  ][line width=0.75]    (10.93,-3.29) .. controls (6.95,-1.4) and (3.31,-0.3) .. (0,0) .. controls (3.31,0.3) and (6.95,1.4) .. (10.93,3.29)   ;
    \draw    (292,37.25) -- (292,131.25) ;
    \draw [shift={(292,133.25)}, rotate = 270] [color={rgb, 255:red, 0; green, 0; blue, 0 }  ][line width=0.75]    (10.93,-3.29) .. controls (6.95,-1.4) and (3.31,-0.3) .. (0,0) .. controls (3.31,0.3) and (6.95,1.4) .. (10.93,3.29)   ;
    \draw    (203,83.25) -- (301,83.25) ;
    \draw [shift={(303,83.25)}, rotate = 180] [color={rgb, 255:red, 0; green, 0; blue, 0 }  ][line width=0.75]    (10.93,-3.29) .. controls (6.95,-1.4) and (3.31,-0.3) .. (0,0) .. controls (3.31,0.3) and (6.95,1.4) .. (10.93,3.29)   ;
    \draw    (310,22.25) -- (408,22.25) ;
    \draw [shift={(410,22.25)}, rotate = 180] [color={rgb, 255:red, 0; green, 0; blue, 0 }  ][line width=0.75]    (10.93,-3.29) .. controls (6.95,-1.4) and (3.31,-0.3) .. (0,0) .. controls (3.31,0.3) and (6.95,1.4) .. (10.93,3.29)   ;
    \draw    (313,97.25) -- (313,191.25) ;
    \draw [shift={(313,193.25)}, rotate = 270] [color={rgb, 255:red, 0; green, 0; blue, 0 }  ][line width=0.75]    (10.93,-3.29) .. controls (6.95,-1.4) and (3.31,-0.3) .. (0,0) .. controls (3.31,0.3) and (6.95,1.4) .. (10.93,3.29)   ;
    \draw    (419,36.25) -- (419,130.25) ;
    \draw [shift={(419,132.25)}, rotate = 270] [color={rgb, 255:red, 0; green, 0; blue, 0 }  ][line width=0.75]    (10.93,-3.29) .. controls (6.95,-1.4) and (3.31,-0.3) .. (0,0) .. controls (3.31,0.3) and (6.95,1.4) .. (10.93,3.29)   ;
    \draw    (327,74.25) -- (412.23,29.18) ;
    \draw [shift={(414,28.25)}, rotate = 152.13] [color={rgb, 255:red, 0; green, 0; blue, 0 }  ][line width=0.75]    (10.93,-3.29) .. controls (6.95,-1.4) and (3.31,-0.3) .. (0,0) .. controls (3.31,0.3) and (6.95,1.4) .. (10.93,3.29)   ;
    \draw (237.5,178.65) node [anchor=north west][inner sep=0.75pt]    {$f$};
    \draw (367.5,179.65) node [anchor=north west][inner sep=0.75pt]    {$g$};
    \draw (231,217.4) node [anchor=north west][inner sep=0.75pt]    {$\psi $};
    \draw (349,125.4) node [anchor=north west][inner sep=0.75pt]    {$\psi $};
    \draw (284,14.4) node [anchor=north west][inner sep=0.75pt]    {$\mathbb{T}^{2}$};
    \draw (173,72.4) node [anchor=north west][inner sep=0.75pt]    {$\mathbb{T}^{2}$};
    \draw (296,54.4) node [anchor=north west][inner sep=0.75pt]    {$\pi $};
    \draw (164,127.4) node [anchor=north west][inner sep=0.75pt]    {$\pi $};
    \draw (315,117.4) node [anchor=north west][inner sep=0.75pt]    {$\sigma $};
    \draw (423,73.4) node [anchor=north west][inner sep=0.75pt]    {$\sigma $};
    \draw (370,50.4) node [anchor=north west][inner sep=0.75pt]    {$\tilde{f}$};
    \draw (244,87.4) node [anchor=north west][inner sep=0.75pt]    {$h$};
    \draw (356,2.4) node [anchor=north west][inner sep=0.75pt]    {$h$};
    \draw (172,202.4) node [anchor=north west][inner sep=0.75pt]    {$\mathbb{S}^{2}$};
    \draw (302,203.4) node [anchor=north west][inner sep=0.75pt]    {$\mathbb{S}^{2}$};
    \draw (287,138.4) node [anchor=north west][inner sep=0.75pt]    {$\mathbb{S}^{2}$};
    \draw (411,137.4) node [anchor=north west][inner sep=0.75pt]    {$\mathbb{S}^{2}$};
    \draw (305,72.4) node [anchor=north west][inner sep=0.75pt]    {$\mathbb{T}^{2}$};
    \draw (413,11.4) node [anchor=north west][inner sep=0.75pt]    {$\mathbb{T}^{2}$};
    \draw (226,25.4) node [anchor=north west][inner sep=0.75pt]    {$\tilde{f}$};
    \end{tikzpicture}
    \caption{Commutative diagram.}
\end{figure}

Now assume that $S$ is the Projective Plane with $\chi(S)=1$. 
We claim that there does not exist a cw$_2$-hyperbolic homeomorphism $\bar f\colon \mathbb{RP}^2\to \mathbb{RP}^2$. 
Suppose, on the contrary, the existence of a cw$_2$-hyperbolic homeomorphism $\bar f\colon \mathbb{RP}^2\to \mathbb{RP}^2$. We can lift $\bar f$ to a cw$_2$-hyperbolic homeomorphism $f\colon \mathbb{S}^2\to \mathbb{S}^2$ using a double covering map $\alpha\colon\mathbb{S}^2\to\mathbb{RP}^2$ defined by the relation $x\sim -x$ on the unit sphere $\mathbb{S}^2\subset \R^3$ (see Theorem \ref{fincovcw} in the appendix and Corollary~\ref{cor:cw2}). Then by the case of cw$_F$-hyperbolic homeomorphisms of $\mathbb{S}^2$ considered earlier, as we have already proven, there is a hyperbolic toral automorphism defined by a hyperbolic matrix $A\in SL(2,\Z)$, a homeomorphism $g\colon \mathbb{S}^2\to \mathbb{S}^2$ which is a quotient of an Anosov automorphism induced by the antipodal factor map $\sigma\colon \mathbb{T}^2\to \mathbb{S}^2$,
and a homeomorphism $\psi\colon \mathbb{S}^2\to \mathbb{S}^2$ satisfying $g\circ \psi=\psi\circ f$ and $f\circ \psi^{-1}=\psi^{-1}\circ g$. The deck transformation of $\alpha$ is the antipodal map $\tau_{\alpha}(x)=-x$ of $\R^3$ restricted to $\mathbb{S}^2$. Note that it is a fixed point free involution on $\mathbb{S}^2$.
Consider the map $i\colon\mathbb{S}^2\to\mathbb{S}^2$ defined by $i:=\psi\circ \tau_{\alpha}\circ \psi^{-1}$ and
note that $i^2=\psi\circ\tau_{\alpha}^2\circ\psi^{-1}=id$, that is, $i$ is an involution of $\mathbb{S}^2$. Note that $i$ is also fixed point free since $i$ is topologically conjugate to $\tau_{\alpha}$ by $\psi^{-1}$. Moreover, note that $i$ commutes with $g$ since
\begin{eqnarray*}
i\circ g&=&\psi\circ \tau_{\alpha}\circ \psi^{-1}\circ g\\
&=& \psi\circ \tau_{\alpha}\circ f\circ \psi^{-1} \\
&=&\psi\circ f\circ \tau_{\alpha}\circ \psi^{-1}\\
&=&  g\circ \psi\circ\tau_{\alpha}\circ \psi^{-1}\\
&=&g\circ i.
\end{eqnarray*}

We claim that such involution $i$ cannot exist. The set $\spine(g)$ is $g$-invariant, and $i\circ g=g \circ i$, which forces $i(\spine(g))$ to be a
$g$-invariant set with the same local $1$-prong structure, that is, the set $\spine(g)$ itself. As
$\spine(g)$ is exactly the set of branching points of $\sigma\colon\mathbb{T}^2\to\mathbb{S}^2$, the involution $i$
preserves the branch data of $\sigma$ and therefore lifts to a homeomorphism $\widehat i$ of
$\T^2$ with $\sigma\circ\widehat i=i\circ\sigma$, which lifts further to
$\widetilde i\colon\R^2\to\R^2$. Writing $\widetilde i(x)=Lx+P(x)$ with $L\in GL(2,\Z)$ and
$P$ continuous and $\Z^2$-periodic, the commutation of $i$ with $g$ makes $\widetilde i$
commute with the linear action of $A$ modulo $\Z^2$. Arguing exactly as in the sphere case
(see arguments near \eqref{AnR}, using that
$A$ is hyperbolic) we get $P\equiv\mathrm{const}$, so $\widetilde i(x)=Lx+p$ for some $p\in \R^2$.

Furthermore, $\tilde{i}$ must be compatible with the covering involution $\sigma(x) = -x$, meaning $\tilde{i}(-x) \equiv -\tilde{i}(x) \pmod{\mathbb{Z}^2}$. This implies
$$ -Lx + p \equiv -Lx - p \pmod{\mathbb{Z}^2} \implies 2p \equiv 0 \pmod{\mathbb{Z}^2}.$$
%Hence, $p$ is a point of order 2 in the torus.
In other words, $p\in \frac{1}{2}\Z^2$. Since $i$ is fixed point free on $\mathbb{S}^2$, its lift $\tilde{i}$ cannot map any $x \in \mathbb{R}^2$ to its own class or its antipodal class modulo $\mathbb{Z}^2$. This gives two conditions for all $x \in \mathbb{R}^2$ that have to be satisfied simultaneously:
\begin{enumerate}
    \item $Lx + p %\not\equiv 
    \neq x \pmod{\mathbb{Z}^2} \implies (L - I)x %\not\equiv 
    \neq -p $.%\pmod{\mathbb{Z}^2}$.
    \item $Lx + p %\not\equiv 
    \neq -x \pmod{\mathbb{Z}^2} \implies (L + I)x %\not\equiv 
    \neq -p $.%\pmod{\mathbb{Z}^2}$.
\end{enumerate}
For these equations to have no solutions in $\mathbb{T}^2$, the linear operators $(L - I)$ and $(L + I)$ must be singular. Consequently, $\det(L - I) = 0$ and $\det(L + I) = 0$, which implies that $1$ and $-1$ are eigenvalues of $L$. Since $L \in GL(2, \mathbb{Z})$, $L$ must be a reflection with $\det(L) = -1$ and $\text{tr}(L)=0$.
Also note that there are eigenvectors of $L$ with (rational and therefore also with) integer coordinates.
It is also clear that $L^2=I$, so in particular $L=L^{-1}$. 

Now we analyze the commutativity $g\circ i = i \circ g$. Lifted to 
$\R^2$ this identity requires:
$$ A \circ \tilde{i} \equiv \pm \tilde{i} \circ A \pmod{\mathbb{Z}^2} $$
Comparing the linear parts, we must have either $AL = LA$ or $AL = -LA$. We verify both cases:

\textbf{Case 1:} $AL = LA$. 
Since $A$ and $L$ commute, they must share eigenvectors.  
But $A$ is a hyperbolic matrix in $SL(2, \mathbb{Z})$, meaning its invariant spaces correspond to lines with strictly irrational slopes, which is a contradiction.

\textbf{Case 2:} $AL = -LA$. 
Note that $LAL=-A$, so $A$ and $-A$ are conjugate. In particular,
$$ \text{tr}(A) = \text{tr}(-A) \implies 2\text{tr}(A) = 0 \implies \text{tr}(A) = 0. $$
However, $A$ is a hyperbolic matrix in $SL(2, \mathbb{Z})$, which strictly requires $|\text{tr}(A)| > 2$. This is a contradiction.

We have just proved that the involution $i$ cannot exists, and as a consequence, there is no cw$_2$-hyperbolic homeomorphism
on the real projective plane $\mathbb{RP}^2$. This proves the claim and at the same time completes the proof of the theorem.
\end{proof}

\section{Appendix}

In this appendix, we prove that the lifts of a cw$_F$-hyperbolic homeomorphism by a finite covering map are cw$_F$-hyperbolic. 

\begin{theorem}\label{fincovcw}
Let $(\widehat M,\widehat d)$ and $(M,d)$ be compact metric spaces,
$\pi\colon\widehat M\to M$ be a finite covering map, $f\colon M\to M$ be a homeomorphism, and $\widehat f\colon \widehat M\to\widehat M$ be a homeomorphism that is a lift of $f$, i.e. $\pi\circ\widehat f=f\circ\pi$. The following holds:
\begin{enumerate}
\item If $f$ is continuum-wise expansive, then $\widehat f$ is continuum-wise expansive.
\item If $f$ is cw$_F$-expansive, then $\widehat f$ is cw$_F$-expansive.
\item If $f$ satisfies the cw-local-product-structure, then $\widehat f$ satisfies the cw-local-product-structure.
\end{enumerate}
In particular, if $f$ is cw$_F$-hyperbolic, then $\widehat f$ is cw$_F$-hyperbolic.
\end{theorem}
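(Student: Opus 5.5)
The plan rests on one metric fact about finite covers. I would first fix an \emph{evenly covered scale} $r>0$: there is $r>0$ such that for every $x\in M$ the preimage $\pi^{-1}(B(x,r))$ is a disjoint union of finitely many open sets. Each of these is mapped homeomorphically onto $B(x,r)$, and distinct sheets over the same ball are at $\widehat d$-distance at least some $\eta>0$. This follows from compactness and the Lebesgue number lemma.

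Using uniform continuity of $\pi$ and of the inverse branches, I would then pick $r'<r$ with two properties. First, $\widehat d(a,b)<r'$ implies $d(\pi a,\pi b)<r$ and that $a,b$ lie in the same sheet over $B(\pi a,r)$. Second, on each sheet, $\pi$ and its inverse branch are uniformly continuous with a common modulus. The key lifting lemma is then the following. If $\widehat d(\widehat f^k a,\widehat f^k b)\le c'$ for all $k\ge0$, with $c'$ small, then $d(f^k\pi a,f^k\pi b)\le c$. Conversely, if $d(f^k\pi a, f^k\pi b)\le c$ for all $k\ge 0$ with $c$ small, and $a,b$ start in the same sheet, then $\widehat f^k a,\widehat f^k b$ remain in the same sheet for all $k$, and hence are $\widehat c$-close. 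The converse direction is proved by induction on $k$: $\widehat f$ is uniformly continuous, so sheet-mates go to points within a small distance. Those points are therefore not in distinct sheets, because of the gap $\eta$, provided $c$ is tuned so that the image distance stays below $\eta$. It follows that $\pi$ maps $\widehat W^\sigma_{c'}(a)$ into $W^\sigma_c(\pi a)$. On the sheet containing $a$ the map is a homeomorphism onto its image, and the image contains the component $C^\sigma$ (for a suitably smaller constant).

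For item (1), let $c$ be a cw-expansivity constant of $f$ and choose $c'$ as above, smaller than $r'$. If $\widehat\Gamma_{c'}(a)$ contained a nontrivial continuum $K$, then $K$ has diameter less than $r'$ and so lies in one sheet. Then $\pi(K)$ is a nontrivial continuum, since $\pi$ is injective on the sheet. It is contained in $\Gamma_c(\pi a)$, contradicting cw-expansiveness of $f$.

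For item (2), the same reasoning shows that $\pi$ restricted to the sheet maps $\widehat C^s_{c'}(a)\cap\widehat C^u_{c'}(a)$ injectively into $C^s_c(\pi a)\cap C^u_c(\pi a)$. This uses that connected sets containing $a$ inside the sheet project to connected sets containing $\pi a$. Finiteness of the target therefore gives finiteness of the source.

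For item (3), given $\widehat\eps$, take the corresponding $\eps$ for $f$ (smaller than $r'$, with the converse lifting lemma valid), and the $\delta$ from the cw-local-product-structure of $f$, which I would also take below $r'$. Then take $\widehat\delta$ so that $\widehat d(a,b)<\widehat\delta$ gives $d(\pi a,\pi b)<\delta$ and places $a,b$ in a common sheet $U$. The continua $C^s_\eps(\pi a)$ and $C^u_\eps(\pi b)$ have small diameter and intersect at some $z$. Lift them through the inverse branch of $U$. By the converse lemma these lifts are $\widehat\eps$-stable and $\widehat\eps$-unstable continua through $a$ and $b$, so they lie in $\widehat C^s_{\widehat\eps}(a)$ and $\widehat C^u_{\widehat\eps}(b)$, and both contain the lift of $z$.

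The main obstacle is the converse lifting lemma: showing that closeness of projected orbits forces the lifted orbits to stay in the same sheet for all times. I would handle it by the induction above, choosing $c$ smaller than the uniform continuity modulus of $\widehat f^{\pm1}$ pulled back through the sheets, relative to $\eta/2$.
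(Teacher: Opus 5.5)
Your proof is correct. Items (1) and (2) are essentially the paper's argument. In (1) the paper projects a continuum $\widehat A$ with small iterates to a single point and then uses finiteness of fibers, while you use injectivity of $\pi$ on a sheet to push a nontrivial continuum into $\Gamma_c(\pi a)$; (2) is the same argument.

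The genuine difference is in (3). The paper needs no sheet-persistence induction. It lifts $C^s_\beta(\pi(x))$ to the piece $S$ through $x$ and observes that $\widehat f^n(S)$ is connected and projects into $f^n(C^s_\beta(\pi(x)))$, which has diameter less than $\alpha$. Hence $\widehat f^n(S)$ lies in a single sheet of diameter less than $\rho$, so connectedness does the work your induction does.

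The paper also lifts the stable and unstable continua through the sheets containing $x$ and $y$ separately. Only afterwards does it identify the two lifts of $z$, using the $2\theta$ separation of fibers. You instead put $a,b$ in a common sheet $U$ at the outset, so the lifts of $z$ coincide by construction.

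Your pointwise lemma is slightly stronger: it controls every point of the lift to $U$ of $W^\sigma_c(\pi a)$, not only connected pieces. But it rests on two facts you only assert: the uniform gap $\eta$ between sheets, and a common modulus of continuity for all inverse branches. Both are provable. The latter follows by compactness, once sheets are taken of diameter below the minimal fiber gap: same-sheet pairs $u_n,v_n$ with $d(\pi u_n,\pi v_n)\to 0$ and $\widehat d(u_n,v_n)\ge\eps$ would converge to distinct points of one fiber closer than that gap. Together these two facts amount to Eilenberg's theorem, which the paper simply quotes.

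One bookkeeping point: choose $\eps+\delta$ below the evenly covered radius. Then both $C^s_\eps(\pi a)$ and $C^u_\eps(\pi b)$ lie in the ball over which $U$ is a sheet, and the lifts through the inverse branch of $U$ are defined.
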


In the proof, we use the following classical result about finite covering maps.

\begin{theorem}[Eilenberg's Theorem - Thm 2.1.1 in \cite{AH}]
If $\pi\colon \widehat{M}\to M$ is a finite covering map of a compact metric space $M$, then there exist numbers $\lambda>0$ and $\theta>0$ such that for every subset $V\subset M$ with $\diam(V)<\lambda$, we have $$\pi^{-1}(V)=V_1\cup V_2\cup \cdots \cup V_k$$ with the following properties:

    \begin{itemize}
        \item $\pi\vert_{V_i}\colon V_i\to V$ is a homeomorphism for every $i\in\{1,\ldots,k\}$;
        \item If $i\neq i'$, then $\dist(V_i,V_{i'})>2\theta$;
        \item For each $\eps>0$, there exists $0<\delta<\lambda$ such that if $\diam_d(V)<\delta$, then $\diam_{\widehat d}(V_i)<\eps$ for every $i\in\{1,\ldots,k\}$.
    \end{itemize}
\end{theorem}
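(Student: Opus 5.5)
We would derive all three properties of the statement from compactness together with the local triviality of $\pi$. First, for every $x\in M$ we choose an open neighborhood $W_x$ that is evenly covered: $\pi^{-1}(W_x)$ is a disjoint union of $k$ open sets, each mapped homeomorphically onto $W_x$. Here $k$ is the (locally constant) number of sheets; if $M$ is not connected we argue componentwise, and only finitely many components meet any small set. Shrinking if necessary, we take $W_x$ to be the ball $B(x,r_x)$ and keep the slightly larger ball $B(x,2r_x)$ still evenly covered. Compactness gives finitely many balls $B(x_j,r_{x_j})$ covering $M$. We then choose $\lambda$ to be a Lebesgue number of this cover, smaller than $\min_j r_{x_j}$. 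Any $V$ with $\diam(V)<\lambda$ then lies in some $B(x_j,r_{x_j})$. Intersecting $\pi^{-1}(V)$ with the $k$ sheets over that ball produces $V_1,\dots,V_k$, and the restrictions $\pi|_{V_i}$ are homeomorphisms onto $V$, since they are restrictions of homeomorphisms. This settles the first property.

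For the separation constant $\theta$, we use that over each of the finitely many closed balls $\overline{B}(x_j,r_{x_j})\subset B(x_j,2r_{x_j})$ the $k$ sheets are pairwise disjoint compact sets. Hence their mutual $\widehat d$-distances are bounded below by some positive number $4\theta_j$, and we take $\theta=\min_j\theta_j$. Since each $V_i$ lies in one of these compact sheets, $\dist(V_i,V_{i'})\ge 4\theta>2\theta$ for $i\neq i'$. One point needs care: the decomposition must not depend on the chosen ball $B(x_j,\cdot)$. If two balls both contain $V$, then on $\pi^{-1}(V)$ the partitions into sheets agree, because sheets are exactly the connected pieces distinguished by the local inverses, and the local inverses coincide on overlaps up to relabeling. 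In any case we may simply fix one $j$ for each $V$.

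For the third property, let $s_j\colon \overline{B}(x_j,r_{x_j})\to\widehat M$ denote the $k$ local inverses of $\pi$. Each is continuous on a compact set, hence uniformly continuous. Given $\eps>0$, we take $\delta<\lambda$ to be a common modulus of uniform continuity for these finitely many maps. Then every $V$ with $\diam_d(V)<\delta$ lies in some ball, and each $V_i=s(V)$ for some local inverse $s$ over that ball, so $\diam_{\widehat d}(V_i)<\eps$.

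The only step we expect to require real care is the uniform separation of sheets. Distinct sheets over an open ball may accumulate on each other near the boundary of the ball, so we deliberately work over closed balls of radius $r_{x_j}$ sitting inside evenly covered balls of radius $2r_{x_j}$. This is what makes the sheets compact and disjoint, and hence uniformly separated.
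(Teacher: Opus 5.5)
The paper does not prove this statement; it quotes it from Aoki--Hiraide and uses it only as a black box in the appendix. Your argument is a correct, self-contained replacement for that citation.

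The key device is to work over closed balls $\overline B(x_j,r_{x_j})$ lying inside evenly covered balls $B(x_j,2r_{x_j})$. This does two things at once:
\begin{itemize}
\item it makes the lifted pieces $s_{j,i}(\overline B(x_j,r_{x_j}))$ compact and pairwise disjoint, which yields $\theta$;
\item it makes the finitely many local inverses uniformly continuous, which yields $\delta$.
\end{itemize}
The Lebesgue number $\lambda$ then reduces every small $V$ to one of these finitely many charts. You use only that $\widehat d$ is a metric on $\widehat M$, not that $\widehat M$ is compact, which is compatible with how the paper applies the theorem. Compared with the citation, your route makes the appendix self-contained and shows that all constants come from a finite chart cover.

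Three small points should be cleaned up.
\begin{itemize}
\item Your remark that the sheet decompositions over two different balls agree on $\pi^{-1}(V)$ is false when $V$ is disconnected. For a two-point $V$, the $2k$ lifts can be paired in many ways. The remark does hold for connected $V$, by uniqueness of lifts. Your fallback, fixing one index $j$ for each $V$, is all that is needed. Because your $\delta$ is a common modulus for all $j$, the third property holds for whichever $j$ is chosen.
\item The claim that only finitely many components of $M$ meet a small set is false in general (take $M$ a Cantor set). It is also unnecessary: the fiber cardinality is locally constant, so working ball by ball with $k=k_j$ sheets handles a disconnected $M$ automatically.
\item To get the strict inequality $\diam_{\widehat d}(V_i)<\eps$, take $\delta$ to be a modulus of uniform continuity for $\eps/2$. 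A pointwise bound $<\eps$ only gives $\le\eps$ for the supremum.
\end{itemize}
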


Now we prove the desired result.

\begin{proof}[Proof of Theorem \ref{fincovcw}]

(1) Let $c>0$ be a cw-expansivity constant of $f$. Since $\pi$ is uniformly continuous, there exists $\eta\in(0,c)$ such that $\widehat d(u,v)<\eta$ implies $d(\pi(u),\pi(v))<c$. We claim that $\eta$ is a cw-expansivity constant for $\widehat f$. Let $\widehat A\subset\widehat M$ be a continuum such that 
$$\operatorname{diam}_{\widehat d}(\widehat f^n(\widehat A))<\eta \,\,\,\,\,\, \text{for every} \,\,\,\,\,\, n\in\mathbb Z.$$
By the uniform continuity of $\pi$, we have
$$\operatorname{diam}_d(\pi(\widehat f^n(\widehat A)))<c\,\,\,\,\,\, \text{for every} \,\,\,\,\,\, n\in\mathbb Z$$
and since $\pi\circ\widehat f=f\circ\pi$, it follows that
$\pi(\widehat f^n(\widehat A))=f^n(\pi(\widehat A))$ and consequently
$$\operatorname{diam}_d(f^n(\pi(\widehat A)))<c\,\,\,\,\,\, \text{for every} \,\,\,\,\,\, n\in\mathbb Z.$$
Since the set $\pi(\widehat A)$ is a continuum in $M$ and $c$ is cw-expansivity constant of $f$, we conclude that $\pi(\widehat A)=\{y\}$ for some $y\in M$, from where it follows that $\widehat A\subset\pi^{-1}(y)$. Since $\widehat A$ is a continuum and $\pi^{-1}(y)$ is finite, we conclude that
$\widehat A$ is a singleton and $\widehat f$ is continuum-wise expansive.

\vspace{+0.2cm}

(2) We assume that $f$ is cw$_F$-expansive and prove that $\widehat f$ is cw$_F$-expansive. Let $c>0$ be such that 
$$\#(C^s_c(x)\cap C^u_c(x))<\infty \quad \text{ for every } \quad x\in M$$
and choose $\eta\in(0,c)$ given by uniform continuity of $\pi$. 
Decrease $\eta$ when necessary, so that $\eta<\theta$, where $\theta$ is provided by the Eilenberg's Theorem.
This way, we may assume that $\pi$ is injective on every subset of $\widehat M$ of diameter at most $2\eta$. In
particular $\pi$ is injective on $C^s_\eta(y)\cap C^u_\eta(y)\subset\overline B(y,\eta)$, so
the finite cardinality is preserved by $\pi$.
Then
$$\#(C^s_{\eta}(y)\cap C^u_{\eta}(y))<\infty \quad \text{ for every } \quad y\in \widehat M$$
because whenever $z\in C^s_{\eta}(y)\cap C^u_{\eta}(y)$ we have that $\pi(z)\in C^s_{c}(\pi(y))\cap C^u_{c}(\pi(y))$. This proves that $\widehat f$ is cw$_F$-expansive.
\vspace{+0.2cm}

(3) Since $\pi:\widehat M\to M$ is a finite covering map over a compact metric space, by Eilenberg's Theorem there exist constants $\lambda>0$ and $\theta>0$ such that whenever $V\subset M$ satisfies $\operatorname{diam}(V)<\lambda$, the inverse image $\pi^{-1}(V)$ splits into pairwise disjoint sheets $U_k$ separated by distance greater than $2\theta$.

We assume that $f$ satisfies the cw-local-product-structure and prove that $\widehat f$ satisfies the cw-local-product-structure. Given $\varepsilon>0$, choose $\rho\in(0,\min\{\varepsilon,\theta/3\})$. By Eilenberg's Theorem, there exists $\alpha\in(0,\min\{\rho,\lambda\})$ such that if $V\subset M$ satisfies $\operatorname{diam}(V)<\alpha$, then each sheet $U_k$ of $\pi^{-1}(V)$ has diameter smaller than $\rho$. Choose $\beta\in(0,\alpha/2)$ and $\delta'\in(0,\beta)$, given by the cw-local-product-structure of $f$, such that whenever $d(a,b)<\delta'$ we have
$C^s_\beta(a)\cap C^u_\beta(b)\neq\emptyset$. By uniform continuity of $\pi$, there exists $\eta\in(0,\delta')$ such that $\widehat d(x,y)<\eta$ implies $d(\pi(x),\pi(y))<\delta'$. Now choose $\widehat\delta\in(0,\min\{\eta,\theta/3\})$. We claim that if $\widehat d(x,y)<\widehat\delta$, then
$C^s_\varepsilon(x)\cap C^u_\varepsilon(y)\neq\emptyset$.

Indeed, let $x,y\in\widehat M$ be such that $\widehat d(x,y)<\widehat\delta$. Then $d(\pi(x),\pi(y))<\delta'$ and consequently there exists $z\in C^s_\beta(\pi(x))\cap C^u_\beta(\pi(y))$. Let $V=B_{\alpha}(z)$ and note that $C^s_\beta(\pi(x))\subset V$ because $z\in C^s_\beta(\pi(x))$ and $\operatorname{diam}(C^s_\beta(\pi(x)))\leq 2\beta<\alpha$. Similarly, $C^u_\beta(\pi(y))\subset V$. Since $\pi^{-1}(V)=V_1\cup\dots\cup V_k$ and $\pi\vert_{V_i}\colon V_i\to V$ is a homeomorphism for every $i\in\{1,\ldots,k\}$, we have $\pi^{-1}(C^s_\beta(\pi(x)))=S_1\cup\dots\cup S_k$ with $S_i\subset V_i$ satisfying 
$$\pi\vert_{V_i}(S_i)=C^s_\beta(\pi(x)) \,\,\,\,\,\, \text{for every} \,\,\,\,\,\, i\in\{1,\dots,k\}.$$
Similarly, $\pi^{-1}(C^u_\beta(\pi(y)))=U_1\cup\dots\cup U_k$ with $U_i\subset V_i$ satisfying 
$$\pi\vert_{V_i}(U_i)=C^u_\beta(\pi(y)) \,\,\,\,\,\, \text{for every} \,\,\,\,\,\, i\in\{1,\dots,k\}.$$
Let $S=S_i$ and $U=U_i$ be the ones containing $x$ and $y$, respectively. We prove that $S\subset C^s_\varepsilon(x)$ and $U\subset C^u_\varepsilon(y)$. Indeed,  
note that the choice of $\alpha$ ensures that $\diam_{\widehat d}(V_i)<\rho$ and consequently $\diam_{\widehat d}(S)<\rho$ and $\diam_{\widehat d}(U)<\rho$. Also, $f^n(C^s_\beta(\pi(x)))$ has diameter at most $2\beta<\alpha$ for every $n\geq 0$. Since $\widehat f^n(S)$ is connected and
$$\pi(\widehat f^n(S))=f^n(\pi(S))\subset f^n(C^s_\beta(\pi(x))),$$ the set $\widehat f^n(S)$ is contained in a single sheet over $f^n(C^s_\beta(\pi(x)))$. Therefore 
$$\operatorname{diam}(\widehat f^n(S))<\rho<\varepsilon \,\,\,\,\,\, \text{for every} \,\,\,\,\,\, n\geq 0.$$ Since $S$ is connected and contains $x$, it follows that $S\subset C^s_\varepsilon(x)$. Similarly, using negative iterates, we obtain $U\subset C^u_\varepsilon(y)$.

It remains to prove that $S\cap U\neq\varnothing$. Since $z\in C^s_\beta(\pi(x))\cap C^u_\beta(\pi(y))$, there exists a unique point $\widehat z_s\in S$ such that $\pi(\widehat z_s)=z$, and a unique point $\widehat z_u\in U$ such that $\pi(\widehat z_u)=z$. In particular, $\widehat z_s$ and $\widehat z_u$ belong to the same fiber of $\pi$. Moreover, since $S$ and $U$ have diameter smaller than $\rho$, we have $\widehat d(\widehat z_s,x)<\rho$ and $\widehat d(\widehat z_u,y)<\rho$. Hence,
$$\widehat d(\widehat z_s,\widehat z_u)\leq \widehat d(\widehat z_s,x)+\widehat d(x,y)+\widehat d(y,\widehat z_u)<\rho+\widehat\delta+\rho<\theta.$$
Eilenberg's Theorem ensures that two distinct points in the same fiber of $\pi$ are separated by distance greater than $2\theta$. Since $\widehat z_s$ and $\widehat z_u$ belong to the same fiber and are at distance smaller than $\theta$, we must have $\widehat z_s=\widehat z_u$. Therefore, $S\cap U\neq\emptyset$.

Since $S\subset C^s_\varepsilon(x)$ and $U\subset C^u_\varepsilon(y)$, we conclude that
$C^s_\varepsilon(x)\cap C^u_\varepsilon(y)\neq\emptyset$. Since this argument can be done for each $\eps>0$ we proved that $\widehat f$ satisfies the cw-local-product-structure. This concludes the proof.
\end{proof}

\vspace{+0.2cm}

\hspace{-0.4cm}\textbf{Acknowledgments.}
The authors thank Alfonso Artigue and Lucas H. R. de Souza for discussions during the preparation of this work. Bernardo Carvalho was supported by CNPq project number 446192/2024. 
Piotr Oprocha was supported by the project No.~CZ.02.01.01/00/23\_021/0008759 supported by EU funds through the Operational Programme Johannes Amos Comenius.
Piotr Oprocha is grateful to the Max Planck Institute for Mathematics in Bonn, and the University of Science and Technology of China in Hefei, for their hospitality and financial support.

\vspace{1.0cm}

{\em B. Carvalho}
\vspace{0.2cm}

\noindent

National Laboratory for Scientific Computing – LNCC/MCTI 

Av. Getúlio Vargas 333, CEP 25651-070, 

Petrópolis – RJ, Brazil

\vspace{0.2cm}

\email{bmcarvalho@lncc.br}
\vspace{1.0cm}

{\em P. Oprocha}
\vspace{0.2cm}

\noindent

Centre of Excellence IT4Innovations 

Institute for Research and Applications of Fuzzy Modeling 

University of Ostrava

30. dubna 22, 701 03 Ostrava 1, Czech Republic

\email{piotr.oprocha@osu.cz}
\vspace{1.0cm}

{\em R. Arruda and A. Sarmiento}
\vspace{0.2cm}

\noindent

Departamento de Matem\'atica,

Universidade Federal de Minas Gerais - UFMG

Av. Ant\^onio Carlos, 6627 - Campus Pampulha

Belo Horizonte - MG, Brazil.
\vspace{0.2cm}

\end{document}